\definecolor{vertfonce}{rgb}{0.20, 0.46, 0.25}
\definecolor{rougefonce}{rgb}{0.64, 0.09, 0.20}
\newcommand{\moy}[1]{\langle #1 \rangle}
\newcommand{\DD}{\:\!\mathrm{d}}
\title{Uniform spectral asymptotics for semiclassical wells on phase
  space loops}
\author{\textsc{Deleporte} Alix\footnote{Institute for Mathematics,
    University of Z\"urich, Winterthurerstr. 190, CH-8057 Z\"urich} \and \textsc{Vũ Ngọc} %V\~u Ng\d oc
  San\footnote{Univ Rennes, CNRS, IRMAR - UMR 6625, F-35000 Rennes,
    France}}
\begin{document}
\maketitle

% \begin{footnotesize}
%   \noindent \textbf{Keywords :} Birkhoff normal form, resonances,
%   pseudo-differential operators, spectral asymptotics, symplectic
%   reduction, Toeplitz operators, eigenvalue cluster.\\
%   \noindent \textbf{MS Classification :}
%   58J40, %Pseudodifferential and Fourier integral operators on
%   % manifolds
%   58J50, %Spectral problems; spectral geometry; scattering theory
%   58K50, %Normal forms
%   47B35, %Toeplitz operators, Hankel operators, Wiener-Hopf operators
%   53D20, %Momentum maps; symplectic reduction
%   81S10. %Geometry and quantization, symplectic methods
% \end{footnotesize}

\begin{abstract}
  We consider semiclassical self-adjoint operators whose symbol,
  defined on a two-dimensional symplectic manifold, reaches a
  non-degenerate minimum $b_0$ on a closed curve. We derive a classical and
  quantum normal form which allows us, in addition to the complete
  integrability of the system, to obtain eigenvalue asymptotics in a
  window $(-\infty,b_0+\epsilon)$ for $\epsilon>0$ independent on the
  semiclassical parameter. These asymptotics are obtained in two
  complementary settings: either a symmetry of the system under
  translation along the curve, or a Morse hypothesis reminiscent of
  Helffer-Sjöstrand's ``miniwell'' situation.
\end{abstract}

\section{Wells on closed loops}

Let $(M,\omega)$ be a symplectic surface without boundary. When
introducing quantization, we will assume for simplicity that
$M=T^*\RM$ or $M=T^*S^1$.  Let $\gamma\subset M$ be a smooth embedded
closed loop.  We say that a smooth function $p\in\Cinf(M)$ admits a
non-degenerate well on the loop $\gamma$ if there exists a
neighborhood $\Omega$ of $\gamma$ in $M$ such that
\begin{enumerate}
\item $p_{\restr \Omega}$ is minimal on $\gamma$:
  \begin{equation}
    \label{equ:well}
    p^{-1}(b_0) \cap \Omega=\gamma, \quad \text{ where } 
    \inf_\Omega p = \min_\Omega p = b_0;
  \end{equation}
\item and this minimum is Morse-Bott non-degenerate: at each point
  $m\in\gamma$, the restriction of the Hessian $p''(m)$ to the
  transversal direction to $\gamma$ does not vanish.
\end{enumerate}
Thus by the Morse-Bott lemma, there exists a neighborhood
$\tilde\Omega\subset\Omega$ of $\gamma$, and coordinates
$(x,t): \tilde\Omega \to \gamma\times [-\delta,\delta]$ such that
$\gamma = \{t=0\}$ and $p = b_0 + t^2q(x)$, for some smooth,
non-vanishing function $q$ on $\gamma$.

An example of such a $p$ can be obtained in the following way. Let
$f:M\to\RM$ be smooth and proper, and let $c\in\RM$ be a regular value
of $f$. Let $\gamma$ be a connected component of $f^{-1}(c)$.  Then
define $p=(f-c)^2$. We see that $\gamma$ is a non-degenerate well for
$p$.

As we will see below, this is actually the universal form of a
non-degenerate well. However, this normal form is not sufficient to
describe the semiclassical quantization of our setting, because
assumption~\eqref{equ:well} is \emph{not} stable under
perturbation. In fact, generic perturbations of $p$ create isolated
local extrema along $\gamma$. Local minima are called
\emph{mini-wells} and local maxima \emph{mini-saddles}. The
quantization of the classical universal form will introduce such
perturbations.

\begin{theo}\label{theo:semicl-norm-form}
  Let $I_0$ be the first Bohr-Sommerfeld invariant of $\gamma$ (see
  Subsection \ref{sec:hamiltonian-invariant}). There exists a
  neighborhood $\Omega$ of $\gamma$, $\epsilon>0$, and a Fourier
  integral operator $U: L^2(X)\to L^2(S^1)$ such that
  \begin{enumerate}
  \item $U$ is microlocally unitary from $\Omega$ to $\{(\theta,I)\in T^*S^1,|I-I_0|<\epsilon\}$.
  \item
    $Q=U P U^* = b_0 + (g_\h(\frac{\h}{i}\deriv{}{\theta}))^2 + \h
    V_\h(\theta) + R$,
    where $V_\h$ is an $\h$-dependent potential on $S^1$ with an
    asymptotic expansion
    \[
    V_\h(\theta) = V_0(\theta) + \h V_1(\theta) + \cdots,
    \]
    $g_\h\in \Cinf_0(\RM)$ is supported on an $\h$-independent
    set, with
    \[
    g_\h(I) = g_0(I) + \h g_1(I) + \cdots,
    \]
    and $g_0$ is a local diffeomorphism from a neighborhood of $I=I_0$
    to a neighborhood of $0\in\RM$. Here, $R$ is such that, for every
    $u_{\hbar}$ with
    $WF_{\hbar}(u_{\hbar})\subset \{(\theta,I)\in
    T^*S^1,|I-I_0|<\epsilon\}$, one has
    $\|Ru_{\hbar}\|=\bigO(\hbar^{\infty})\|u_{\hbar}\|$.
  \end{enumerate}
\end{theo}

The first systematic treatment of quantum mini-wells was proposed in
\cite{helffer_puits_1986}, where $P_{\hbar}=-\hbar^2\Delta+V$ is a
Schrödinger operator in several dimensions, and the potential $V$ is
Morse-Bott and minimal on a compact submanifold. Then, under a
non-degeneracy assumption on the mini-wells, one has a complete
expansion, as well as sharp decay estimates, for the lowest energy
eigenfunction of $P_{\hbar}$. This result generalizes to any
Morse-Bott principal symbol which reaches its minimum on a compact
isotropic submanifold, see~\cite{deleporte_low-energy_2017} for a
treatment in the Berezin-Toeplitz setting.

Other settings in which the principal symbol vanishes in a Morse-Bott
way include magnetic Laplacians, where the minimal set is the zero set
of the kinetic energy of the classical charged particle. In the
two-dimensional case, under the assumption that the magnetic field
does not vanish, the minimal set is symplectic, and one obtains an
effective 1D quantum Hamiltonian by viewing the minimal set as the
reduced phase space. This gives rises to spectral asymptotic to all
orders, see~\cite{raymond_geometry_2015}. In the three-dimensional
case, under the assumption of a maximal rank magnetic 2-form, and a
non-degenerate minimum for the magnetic intensity, one obtains a more
intricate reduction, which contains half-integer powers of the
semiclassical parameter $\h$, see~\cite{helffer_magnetic_2016}.

In this paper, we focus on the 1D case. Despite the critical points,
we are able to to formulate Bohr-Sommerfeld conditions (in a folded
covering) for the eigenvalues in a macroscopic window
$[\min \mathop{Sp}(P_{\hbar}),\min \mathop{Sp}(P_{\hbar})+c]$ for $c$
small, see Propositions \ref{prop:small-e} and \ref{prop:large-e}. The
invariant $I_0$ appears in the low-lying eigenvalues under a symmetry
hypothesis.

\begin{prop}\label{prop:oscill-ev}
  Let $k\geq 0$. Suppose that, in Theorem \ref{theo:semicl-norm-form}, the $k+1$ first
  terms $V_0,V_1,\ldots, V_k$ of the potential do not depend on $\theta$. Suppose
  also that $P_{\hbar}-b_0$ is elliptic at infinity. Then the
  following is true.
  \begin{enumerate}
  \item There exists $f:S^1\to \RM$ non-constant such that the first
    eigenvalue $e_{0}^{\hbar}$ of $P_{\hbar}$ satisfies:
    \[
      e_0^{\hbar}=b_0+\hbar
      V_0(0)+\hbar^2f\left(\frac{I_0}{\hbar}\text{ mod
        }\ZM\right)+\bigO(\hbar^{\max(k+2,3)}).
    \]
    
  \item Let $e_1^{\hbar}$ similarly denote the second eigenvalue of
    $P_{\hbar}$ (with multiplicity). There exists a sequence $(\hbar_j)_{j\in \NM}\to 0$ such that
    \[
      e_1^{\hbar_j}-e_0^{\hbar_j}=\bigO(\hbar^{k+2}).
      \]
  \end{enumerate}
\end{prop}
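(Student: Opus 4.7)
The plan is to combine the normal form of Theorem~\ref{theo:semicl-norm-form} with Fourier diagonalization, which becomes available under the $\theta$-invariance hypothesis. First, ellipticity at infinity of $P_\hbar - b_0$ forces eigenfunctions with eigenvalue in $[b_0,b_0+c]$ to be microlocalized near $\gamma$, so after conjugating by the FIO $U$ the relevant spectrum coincides, modulo $\bigO(\hbar^\infty)$, with that of $Q = b_0 + g_\hbar(D_\theta)^2 + \hbar V_\hbar(\theta) + R$ on the subspace of $L^2(S^1)$ supported microlocally in $\{|I - I_0| < \epsilon\}$, where $D_\theta := \tfrac{\hbar}{i}\partial_\theta$. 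The assumption that $V_0,\ldots,V_k$ are $\theta$-constant lets one split $Q = Q_0 + \hbar^{k+2}\widetilde V_\hbar(\theta) + R$ where $Q_0 := b_0 + g_\hbar(D_\theta)^2 + \hbar\sum_{j=0}^k \hbar^j V_j(0)$ is a function of $D_\theta$ alone, hence diagonalized in the Fourier basis $\{e^{in\theta}\}_{n\in\ZM}$ with eigenvalues
\[
\lambda_n(\hbar) := b_0 + g_\hbar(\hbar n)^2 + \hbar\sum_{j=0}^k \hbar^j V_j(0).
\]
The residual has operator norm $\bigO(\hbar^{k+2})$ on the relevant subspace, so the min--max principle yields $|e_m^\hbar - \lambda_{(m)}(\hbar)| = \bigO(\hbar^{k+2})$ for the increasing rearrangement $(\lambda_{(m)})$ of $(\lambda_n)$.

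For item (1), I would Taylor-expand $g_\hbar(\hbar n)^2$ at $\hbar n = I_0$, using $g_0(I_0)=0$ and $g_0'(I_0)\neq 0$: setting $u = n - I_0/\hbar$, one obtains $g_\hbar(\hbar n)^2 = \hbar^2(g_0'(I_0) u + g_1(I_0))^2 + \bigO(\hbar^3)$, whose minimum over $n\in\ZM$ equals $\hbar^2 f(I_0/\hbar \bmod \ZM) + \bigO(\hbar^3)$ for the non-constant $1$-periodic function $f(z) := g_0'(I_0)^2 \cdot d\bigl(z + g_1(I_0)/g_0'(I_0),\ZM\bigr)^2$, with $d(\cdot,\ZM)$ denoting distance to the nearest integer. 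For item (2), the existence of $\hbar_N^* \to 0$ with $\lambda_N(\hbar_N^*) = \lambda_{N+1}(\hbar_N^*)$ follows from an intermediate value argument on $S(\hbar) := g_\hbar(\hbar N) + g_\hbar(\hbar(N+1))$: since $g_\hbar$ has a unique simple zero at some $I^*(\hbar) = I_0 - \hbar\, g_1(I_0)/g_0'(I_0) + \bigO(\hbar^2)$, and $S(\hbar) \approx 2 g_\hbar(\hbar(N+1/2))$, the function $S$ vanishes at some $\hbar_N^* \sim I_0/(N+1/2)$, giving $g_\hbar(\hbar N) = -g_\hbar(\hbar(N+1))$ and hence $\lambda_N(\hbar_N^*) = \lambda_{N+1}(\hbar_N^*)$. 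For $N$ large these are the two smallest Fourier-mode eigenvalues (all other $\lambda_n$ being $\Theta(\hbar^2)$ further), so by the perturbation bound $e_1^{\hbar_N^*} - e_0^{\hbar_N^*} = \bigO(\hbar^{k+2})$; taking $\hbar_j := \hbar_{N_j}^*$ for any $N_j \to \infty$ concludes.

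The main obstacle will be the $k=0$ case of (1): the $\theta$-dependent remainder $\hbar^2 V_1(\theta) + \cdots$ has magnitude $\bigO(\hbar^2)$, comparable to the identified oscillation $\hbar^2 f(I_0/\hbar)$. This is handled by noting that the first-order correction at each Fourier mode $e^{in\theta}$ equals $\hbar^2$ times the $\theta$-average of the remainder, which is $n$-independent and merely shifts $f$ by a constant. The $\bigO(\hbar^2)$ second-order corrections, which arise because the spectral gaps $\lambda_{n_*}-\lambda_{n_*\pm 1}$ are themselves $\Theta(\hbar^2)$, are controlled either by a further averaging conjugation removing the $\hbar^2 V_1(\theta)$ term modulo $\bigO(\hbar^3)$, or by a Grushin reduction in the near-degenerate regime; both route the oscillatory $\hbar^2$ dependence into $f$ and push the genuine error to $\bigO(\hbar^3)$.
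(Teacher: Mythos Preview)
For $k\geq 1$ your Fourier-diagonalization argument coincides with the paper's (its first proposition in Section~\ref{sec:case-with-symmetry}), and your intermediate-value construction of the sequence $(\hbar_N^*)$ for item~(2) is in fact more explicit than what the paper writes out.

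The gap lies in your treatment of $k=0$. The averaging conjugation you propose is genuinely blocked: removing the $\theta$-dependent part of $V_1$ requires solving $\{a,g_0(I)^2\}=V_1(\theta)-\langle V_1\rangle$, and by Lemma~\ref{lemm:ad} this has no smooth solution because the right-hand side does not vanish on $\gamma$ --- this is precisely why the normal form of Theorem~\ref{theo:semicl-norm-form} cannot be pushed further. Equivalently, in the Fourier basis the generator you would need has entries $B_{nm}\sim \hbar^2\widehat{V_1}(m-n)/(\lambda_n-\lambda_m)=O(1)$ near the bottom, so commutator errors stay at order $\hbar^2$. Your Grushin alternative is closer in spirit, but as stated it does not supply the two facts the statement demands: that the resulting lowest eigenvalue is \emph{exactly} $1$-periodic in $I_0/\hbar$, and that it remains non-constant once the off-diagonal $V_1$ contributions are included.

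The paper's route for $k=0$ is different. It writes $\hbar^{-2}(Q-b_0-\hbar V_0)$, modulo $O(\hbar)$, as an operator $A(\sigma)$ on $\ell^2(\ZM)$ (with $\sigma=I_0/\hbar$) whose diagonal entries are a fixed quadratic in $k-\sigma$ and whose off-diagonal entries are the $\sigma$-independent Fourier coefficients $v_{l-k}$ of $V_1$. The shift identity $\lambda_k(\sigma)=\lambda_{k+1}(\sigma+1)$ then shows that $A(\sigma+1)$ is unitarily equivalent to $A(\sigma)$ via the $\ell^2$ shift, so $f(\sigma):=\inf\mathrm{Sp}\,A(\sigma)$ is $1$-periodic \emph{a priori}, with no perturbation series needed. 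Non-constancy is argued from analytic dependence of $A$ on $\sigma$ combined with $\partial_\sigma^2 A$ being a positive multiple of the identity. This packaging into $A(\sigma)$ is the structural step missing from your outline; once one has it, the small-denominator issue you correctly flagged disappears.
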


This oscillatory behaviour of the first eigenvalue was remarked in
recent work on the magnetic Laplacian \cite{helffer_thin_2019}. It is
related to the topological nature of the problem: low-energy
eigenfunctions are microsupported on a non-contractible set (here,
$\gamma$). In previous works by one of the authors~\cite{san-fn,
  san-focus} , quantum maps between open sets of non-trivial topology
were already discussed.

In the generic case where $V_0$ is a Morse function, this oscillatory
behaviour disappears at the bottom of the spectrum: because of
these subprincipal effects, eigenfunctions with energies smaller than
$b_0+\hbar \max(V_0)$ will microlocalise on a contractible set, and
one can build a quantum normal form independent of $I_0$.

This paper is organized as follows: Section \ref{sec:morse-bott-functions}
contains a classical normal form for functions admitting a
non-degenerate well on a closed loop and a reminder on the invariant $I_0$. In preparation for the quantum
normal form, Section \ref{sec:formal-perturbations} contains a
treatment of formal perturbations of the normal form above. Then, in
Section \ref{sec:semicl-norm-form} we derive a corresponding quantum
normal form, microlocally near the non-degenerate well. In Section
\ref{sec:low-energy-spectrum} we apply this quantum normal form to
obtain asymptotics of the low-lying eigenvalues.

\section{Reduction of Morse-Bott functions}
\label{sec:morse-bott-functions}

\subsection{Local symplectic normal form}
\label{sec:class-norm-form}

The Morse-Bott condition on $p$ near $\gamma$ amounts to the
following: there exist a neighbourhood $\Omega_1$ of
$\gamma$ and (non necessarily symplectic) smooth coordinates
$(t,x):\widetilde{\Omega}\to [-1,1]\times \mathbb{S}^1$, such that
\[
  p=b_0+t^2.
\]
In particular, $\gamma=\{t=0\}$ is a regular level set of the function
$t$.

By the action-angle theorem, there exists a possibly smaller open
neighborhood of $\gamma$, $\Omega_2$, equipped with symplectic
coordinates $(\theta,I)\in T^* S^1$, such that $t=g(I)$ for some
smooth function $g$, with
\begin{enumerate}
\item $\gamma = \{I=0\}$;
\item $g'(0)\neq 0$.
\end{enumerate}

Thus we obtain
\begin{prop}\label{prop:class-normal-form}
  If $p$ admits a non-degenerate well along a closed curve $\gamma$,
  then there exists ``folded action-angle'' coordinates $(\theta,I)$
  near $\gamma$ that are adapted to $p$, in the sense that
\[
p = b_0 + (g(I))^2,
\]
for some smooth function $g:(\RM,0)\to(\RM,0)$ with non-vanishing
derivative.
\end{prop}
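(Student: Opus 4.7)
The plan is to combine the Morse--Bott normal form already recalled in the excerpt with the Arnold--Liouville (action--angle) theorem applied to the auxiliary regular function $t$. The key observation is that although $p$ is degenerate along $\gamma$, the function $t$ provided by Morse--Bott is a \emph{regular} Hamiltonian near $\gamma$, and its flow foliates a tubular neighborhood by closed orbits.

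First I would start from the Morse--Bott coordinates $(t,x):\widetilde\Omega\to[-1,1]\times S^1$ with $p=b_0+t^2$ and $\gamma=\{t=0\}$, as already set up. Since $(t,x)$ is a smooth chart, $dt$ does not vanish on $\gamma$, hence on a possibly smaller neighborhood of $\gamma$. The level sets $\{t=t_0\}$ are diffeomorphic to $S^1$ (they are exactly the circles $\{t_0\}\times S^1_x$), and form a smooth foliation in a tubular neighborhood $\Omega_2$ of $\gamma$. On each such circle, the Hamiltonian vector field $X_t$ is a nowhere-vanishing vector field on $S^1$, so its flow is periodic with period $T(t_0)$ depending smoothly on $t_0$.

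Next I would apply the action-angle construction: on $\Omega_2$ the symplectic form $\omega$ admits a primitive $\alpha$, and I set
\[
  I(t_0) = \frac{1}{2\pi} \oint_{\{t=t_0\}} \alpha.
\]
Standard arguments (Arnold--Liouville in the simple one-dimensional setting) yield symplectic coordinates $(\theta,I)\in T^*S^1$ on $\Omega_2$, in which $t$ depends only on $I$: $t=g(I)$ for some smooth $g$. By translating $I$ by a constant, I may assume $\gamma=\{I=0\}$, and therefore $g(0)=0$. The condition $dt\neq 0$ on $\gamma$, combined with $dt=g'(I)\,dI$, forces $g'(0)\neq 0$. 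Substituting into the Morse--Bott expression gives
\[
  p = b_0 + t^2 = b_0 + (g(I))^2
\]
in the symplectic coordinates $(\theta,I)$, which is the claimed normal form.

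The only real subtlety is justifying the existence of a primitive $\alpha$ in $\Omega_2$ and the smoothness of $I$ as a function of $t_0$ through $t_0=0$, i.e.\ across the ``degenerate'' loop $\gamma$. In our two-dimensional setting, $\Omega_2$ can be taken diffeomorphic to an annulus $S^1\times(-\delta,\delta)$, on which any closed 2-form is exact, so a primitive exists. The smoothness of $I(t_0)$ is a routine consequence of the smooth dependence of the foliation on $t_0$, and non-vanishing of $g'$ at $0$ is automatic since $t$ remains regular at $\gamma$. This is where one might expect a difficulty, but it dissolves because $t$ (not $p$) is the function driving the action-angle reduction: the folding in the name ``folded action-angle'' reflects the composition $t\mapsto t^2=p-b_0$, which is the source of the double covering for values of $p$ slightly above $b_0$, and which only appears \emph{after} the symplectic reduction has been performed on the regular function $t$.
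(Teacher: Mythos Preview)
Your proof is correct and follows essentially the same route as the paper: apply the Morse--Bott lemma to write $p=b_0+t^2$ in non-symplectic coordinates, observe that $t$ is regular along $\gamma$, and then invoke the action--angle theorem for $t$ to obtain symplectic coordinates $(\theta,I)$ with $t=g(I)$, $g(0)=0$, $g'(0)\neq 0$. You have simply filled in more of the details (existence of a primitive on the annulus, smoothness of $I$, the translation normalizing $I=0$ on $\gamma$) than the paper's two-paragraph argument makes explicit.
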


\begin{rema}
  It follows that the set of leaves defined by $p$, \emph{i.e.} the
  space of connected components of levels sets of $p$, is a smooth
  one-dimensional manifold $\mathcal{C}$ (parameterized by $I$ or
  $\tilde I := g(I)$) , and the induced map
  $\bar p - b_0:\mathcal{C}\to\RM$ is a simple fold:
  $\tilde I \mapsto \tilde I^2$.
\end{rema}

For any $\delta\in\mathcal{C}$, and $h\in\Cinf(\hat\Omega)$, we define 
\begin{equation}
  \moy{h}_\delta := \frac{1}{2\pi}\int_0^{2\pi} h(\theta,I(\delta)) \DD \theta.
\end{equation}
Let us denote by $\phy_H^t$ the hamiltonian flow of the function $H$
at time $t$. We notice that, for all $m\in\delta$,
\begin{equation}
\label{equ:moy}
  \moy{h}_\delta = \frac{1}{2\pi}\int_0^{2\pi} (\phy_I^t)^* h (m)\DD t = 
  \frac{1}{T_\delta}\int_0^{T_\delta} (\phy_f^t)^* h (m)\DD t,
\end{equation}
where $T_\delta = \frac{2\pi}{g'(I(\delta))}$ is the period of the
Hamiltonian flow of $f$ on $\delta$. This defines a map
$m\mapsto \moy{h}_\delta \in \Cinf(\hat\Omega)$, that we denote by
$\moy{h}$.

\subsection{The first Bohr-Sommerfeld invariant}
\label{sec:hamiltonian-invariant}
Let us recall, in this context, the appearance of an invariant
associated to $\gamma$ when quantizing the symplectic change of
variables of Proposition \ref{prop:class-normal-form}.

\begin{defi}
  Suppose that either $M=\RM^2$ or $M=T^*S^1$. We define
  $I_0(\gamma)\in \RM$ as follows:
  \begin{enumerate}
  \item If $\gamma$ is contractible, it is the boundary of a close,
    compact surface $\Sigma\subset M$.
    Let $I_0=\frac{1}{2\pi}\int_{\Sigma}\omega$.
  \item If $\gamma$ is not contractible, then $M=T^*S^1$ and
    $\gamma$ is a curve with winding number $1$ with respect to
    $\theta$. For $K\in \NM$ large enough, $\gamma\cup \{\xi=-K\}$ is
    the boundary of a close, compact surface $\Sigma\subset M$. Let $I_0=\frac{1}{2\pi}(-K+\int_{\Sigma}\omega)$.
  \end{enumerate}
\end{defi}

The following proposition is well known.
\begin{prop}
  $I_0$ is a Hamiltonian invariant of $\gamma$.
\end{prop}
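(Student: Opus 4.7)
The plan is to reduce both the well-definedness of $I_0$ and its invariance under Hamiltonian symplectomorphisms to Stokes' theorem applied to a global primitive $\lambda$ of $\omega$. On both $M = \RM^2$ and $M = T^*S^1$ such a primitive exists; one may take the standard Liouville form (for instance, $\lambda = \xi\,\dd\theta$ on $T^*S^1$).

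First I would show that, in both cases of the definition, $I_0$ coincides with the intrinsic quantity $\frac{1}{2\pi}\oint_\gamma \lambda$, which manifestly depends only on $\gamma$. In case (i), Stokes' theorem directly gives $\int_\Sigma \omega = \oint_\gamma \lambda$. In case (ii), Stokes produces an additional boundary contribution $\oint_{\{\xi=-K\}}\xi\,\dd\theta = -2\pi K$; a brief computation shows that this term is exactly absorbed by the explicit $-K$ correction in the definition, so the $K$-dependence drops out. This simultaneously establishes independence of the choice of $\Sigma$ and of $K$.

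With the intrinsic formula in hand, invariance under a Hamiltonian isotopy $\phy_{H_t}^t$ follows from a Cartan-formula computation. Since $\dd\lambda = \omega$,
\[
  \mathcal{L}_{X_{H_t}}\lambda = i_{X_{H_t}}\omega + \dd\, i_{X_{H_t}}\lambda = \dd(H_t + i_{X_{H_t}}\lambda)
\]
is globally exact. Pullback by $\phy_{H_t}^t$ preserves exactness, so $\frac{\dd}{\dd t}(\phy_{H_t}^t)^*\lambda$ is exact and its integral over the closed loop $\gamma$ vanishes. Consequently
\[
  \oint_{\phy_{H_t}^t(\gamma)}\lambda = \oint_\gamma (\phy_{H_t}^t)^*\lambda
\]
is constant in $t$, which is the stated invariance.

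The one place where care is required is case (ii): a general Hamiltonian isotopy may move the auxiliary section $\{\xi = -K\}$ in a complicated way. The intrinsic formula above bypasses this issue entirely; alternatively, one can restrict to Hamiltonians supported in a compact region of $T^*S^1$, for which $\phy_{H_t}^t$ is the identity near $\{\xi = -K\}$ when $K$ is large enough, and the boundary term in the definition is transported rigidly. This is the only mildly delicate point in what is otherwise a routine Stokes-and-Cartan argument.
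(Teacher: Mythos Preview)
Your argument is correct and follows essentially the same route as the paper: both rewrite $I_0(\gamma)$ as $\tfrac{1}{2\pi}\oint_\gamma \lambda$ for the canonical Liouville form and then invoke Cartan's formula to see that $\mathcal{L}_{X_H}\lambda$ is exact, hence the line integral along the closed loop is preserved by the Hamiltonian flow. Your version is slightly more explicit about the Stokes computation that identifies the two expressions (and about the cancellation of the $K$-term in case~(ii)), but the strategy is the same.
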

\begin{demo}
  In case 1, $I_0$ is clearly a symplectic invariant.

  In case 2, (and in fact the same reasoning applies to case 1 as
  well) let $\alpha = I \dd\theta$ be the canonical Liouville 1-form
  of $T^*S^1$; then we have $I_0(\gamma) = \int_\gamma \alpha$
  (because both sides of this equality vanish when $\gamma$ is
  $\{I=0\}$.)  If $X$ is a Hamiltonian vector field, then by Cartan's
  formula, $\mathcal{L}_X\alpha$ is an exact 1-form and hence acts on
  the cohomology class of $\alpha$ restricted to $\gamma$ (known as
  the Liouville class of $\gamma$). Therefore, a Hamiltonian flow
  preserves the Liouville class. On $T^* S^1$ this means that it
  preserves the integral $\int_\gamma \alpha$.

\end{demo}
\begin{rema}
  In the case 2 above, $I_0$ is \emph{not} a \emph{symplectic}
  invariant of $\gamma$; indeed any curve of the type $\{\xi=C\}$, for
  $C\in \RM$ can be sent to $\{\xi=0\}$ by the symplectic change of
  variables $(\theta,I)\mapsto (\theta,I-C)$.  However, for this
  curve, $I_0=C$.
\end{rema}

\begin{rema}
  Note that the Liouville class $I_0$ is the first Bohr-Sommerfeld
  invariant, \emph{i.e.}  the principal term in the Bohr-Sommerfeld
  cocycle defined in~\cite{san-focus} (the subprincipal terms involve
  Maslov indices and the 1-form induced by the subprincipal symbol of
  $P_\h$). In the case of Berezin-Toeplitz quantization, $I_0$ can be
  defined using parallel transport along $\gamma$ on the prequantum
  bundle \cite{charles_symbolic_2006}. In this case, $I_0$ is defined
  up to a sign and modulo $\ZM$, but the choice does not impact the
  oscillations in Proposition \ref{prop:oscill-ev} since, for Toeplitz
  quantization, $\hbar^{-1}$ takes integer values.
\end{rema}

In the rest of this section, we use Proposition
\ref{prop:class-normal-form} to build normal forms given by
Hamiltonian diffeomorphisms.

\begin{prop}\label{prop:semiglobal-ham-R2}
  Let $p:\RM^2\to \RM$ admitting a non-degenerate well along a curve
  $\gamma$.

  There exists $\epsilon>0$ and a symplectic change of variables $\sigma:\RM^2\to
  \RM^2$, equal to the identity outside of a compact set, such that,
  for all $(x,\xi)\in \RM^2$,
  \[
    |x|^2+|\xi|^2\in (2I_0-\epsilon,2I_0+\epsilon)\Rightarrow p\circ
    \sigma (x,\xi)=b_0+(g(|x|^2+|\xi|^2-2I_0))^2.
  \]
  
  \end{prop}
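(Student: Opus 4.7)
The plan is to build $\sigma$ as a composition $\sigma_1\circ\sigma_2$ of two global symplectomorphisms of $\RM^2$, each equal to the identity outside a compact set. The first, $\sigma_1$, will carry the reference circle $C_{I_0}:=\{(x,\xi)\in\RM^2: x^2+\xi^2=2I_0\}$ onto $\gamma$; the second, $\sigma_2$, will match the folded action-angle coordinates of $\tilde p:=p\circ\sigma_1$ with the standard polar action on $\RM^2$.

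For the first step I will use a Dacorogna--Moser argument. By the definition of $I_0$ in case~1, $\gamma$ bounds a disk of symplectic area $2\pi I_0$, as does $C_{I_0}$. A smooth compactly supported diffeomorphism can be chosen sending $C_{I_0}$ to $\gamma$; applying Moser's trick to correct the pulled-back symplectic form, still with compact support (which works precisely because the enclosed areas agree), yields the desired symplectomorphism $\sigma_1$. Setting $\tilde p:=p\circ\sigma_1$, Proposition~\ref{prop:class-normal-form} then provides folded action-angle coordinates $\Phi:S^1\times(-\delta,\delta)\to\RM^2$ with $\Phi(S^1\times\{0\})=C_{I_0}$ and $\tilde p\circ\Phi=b_0+(g(I))^2$. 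In parallel, the global polar chart
\[
\Psi(\theta,I):=\bigl(\sqrt{2(I+I_0)}\cos\theta,\ \sqrt{2(I+I_0)}\sin\theta\bigr)
\]
is symplectic for $I>-I_0$ and also sends $\{I=0\}$ to $C_{I_0}$. Both charts are action-angle systems with common action $I$ vanishing on $C_{I_0}$, so the transition $\Psi^{-1}\circ\Phi$ near $\{I=0\}$ must preserve $I$ and will therefore take the form $(\theta,I)\mapsto(\theta+f(I),I)$ for some smooth function $f$, after possibly composing $\Phi$ with the symplectic involution $(\theta,I)\mapsto(-\theta,-I)$ to align orientations.

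To implement this angular shift globally, I will pick a cutoff $\chi\in\Cinf_c(\RM)$ with $\chi\equiv 1$ on a neighborhood of $0$ and $\operatorname{supp}\chi\subset(-\delta,\delta)$, take $F$ to be a smooth antiderivative of $\chi f$ (so $F$ is constant outside a compact set), and define
\[
H(x,\xi):=F\bigl(\tfrac{x^2+\xi^2}{2}-I_0\bigr).
\]
Its Hamiltonian vector field vanishes outside a compact annulus around $C_{I_0}$, hence its time-$1$ flow $\sigma_2$ is a global symplectomorphism of $\RM^2$, identity outside a compact set, acting in polar coordinates by $(\theta,I)\mapsto(\theta+f(I),I)$ wherever $\chi\equiv 1$. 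On such a neighborhood of $C_{I_0}$, $\sigma_2$ agrees with $\Phi\circ\Psi^{-1}$, and therefore
\[
p\circ\sigma_1\circ\sigma_2(x,\xi)=\tilde p\circ\sigma_2(x,\xi)=b_0+\bigl(g\bigl(\tfrac{x^2+\xi^2}{2}-I_0\bigr)\bigr)^2.
\]
Absorbing the factor $1/2$ into a renaming of $g$ and choosing $\epsilon$ small enough that $(2I_0-\epsilon,2I_0+\epsilon)$ corresponds to the region where $\chi\equiv 1$ then yields the stated formula, with $\sigma:=\sigma_1\circ\sigma_2$.

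The principal obstacle is the compactly supported Moser step producing $\sigma_1$: Proposition~\ref{prop:class-normal-form} is purely local and relating $\gamma$ to a reference circle in $\RM^2$ requires genuinely global input, namely the equality of enclosed symplectic areas. Once $\sigma_1$ is in hand, everything else is routine, since the matching of the angle variable is implemented by a rotation-invariant Hamiltonian, and cutting off in the radial direction causes no issue because $H$ depends only on $x^2+\xi^2$.
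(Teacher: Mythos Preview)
There is a genuine gap in your construction of $\sigma_2$. You assert that the transition $\Psi^{-1}\circ\Phi$ must preserve $I$ and hence be of the form $(\theta,I)\mapsto(\theta+f(I),I)$, but this is not justified and is in general false. The chart $\Phi$ furnished by Proposition~\ref{prop:class-normal-form} is an action--angle chart for the integrable system $\tilde p$: the images $\Phi(\{I=c\})$ are the level curves of $\tilde p$ near $C_{I_0}$. The polar chart $\Psi$ is an action--angle chart for the harmonic oscillator: the images $\Psi(\{I=c\})$ are concentric circles. These two foliations coincide only if $\tilde p$ is already rotation-invariant near $C_{I_0}$, which your construction of $\sigma_1$ does not arrange. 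Uniqueness of the action variable (up to sign and constant) holds only once the Lagrangian foliation is fixed; here you have two distinct foliations, so the functions $I\circ\Phi^{-1}$ and $I\circ\Psi^{-1}$ on $\RM^2$ differ, and $\Psi^{-1}\circ\Phi$ is a general area-preserving annulus map fixing $\{I=0\}$, not a pure angular shift. Such a map cannot be realised as the time-$1$ flow of a rotation-invariant Hamiltonian $H=F\bigl(\tfrac{x^2+\xi^2}{2}-I_0\bigr)$.

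To repair the argument you would have to extend the local symplectomorphism $\Phi\circ\Psi^{-1}$ from an annular neighbourhood of $C_{I_0}$ to a global, compactly supported symplectomorphism of $\RM^2$. That is exactly the content of the paper's proof, carried out directly for the local normal form: one first extends by arbitrary diffeomorphisms over the bounded and unbounded complementary regions (matching along the boundary, using that the induced circle diffeomorphism has winding number $1$ and is isotopic to the identity), and then corrects each piece to be area-preserving via Moser's trick, which succeeds precisely because the enclosed areas agree. Your preliminary step $\sigma_1$ does not bypass this work: after composing with $\sigma_1$ you face the identical extension problem, only with $\gamma$ replaced by $C_{I_0}$.
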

  \begin{demo}
    Let $r_0=\sqrt{2I_0}$.

    By Proposition \ref{prop:class-normal-form}, there exists a
    symplectic change of variables $\sigma_0$ from a neighbourhood $\Omega_0$ of
    $\gamma$ to a neighbourhood of $\{|x|^2+|\xi|^2=2I_0\}$ such that
    $p\circ \sigma_0(x,\xi)=(g(|x|^2+|\xi|^2-2I_0))^2$.

    Let $\Omega_i$ be a neighbourhood of the compact component of
    $\RM^2\setminus \Omega_0$, such that $\Omega_i$ is delimited by a connected component of a
    level set of $p$. The open set $\Omega_i$ is
    contractible; let $\phi_i:\Omega_i\to B(0,r_i)$ be a
    diffeomorphism smooth up to the boundary, where $r_i$ is fixed by
    a scaling and such that $\mathop{vol}(\Omega_i)=\pi r_i^2$.

    The map $\phi_i\circ \sigma_0^{-1}$ is a smooth diffeomorphism
    from the boundary
    $\{x^2+\xi^2=r_i^2\}$ to itself, and is the boundary value of an
    orientation-preserving, smooth map. Hence it has winding number 1
    and is smoothly isotopic to the identity. This allows us to
    correct $\phi_i$ into $\widetilde{\phi}_i$, which satisfies the
    same conditions, and such that $\phi_i\circ\sigma_0^{-1}$ is the
    identity near $\{x^2+\xi^2=r_i^2\}$.

    We apply the same strategy to a neighbourhood $\Omega_e$ of the
    infinite component of $\RM^2\setminus \Omega_0$, and obtain a
    smooth diffeomorphism $\widetilde{\phi}_e$, equal to identity
    outside a very large ball.

    Now the three smooth functions  $\widetilde{\phi}_i$, $\widetilde{\phi}_e$,
    and $\sigma_0$, coincide on the intersections of their domains of
    definition, so that gluing them yields a smooth diffeomorphism
    $\phi$ satisfying the following conditions. 
    \begin{itemize}
    \item There exists a neighbourhood $\Omega_1$ of $\gamma$ on which
      $\phi$ is a symplectomorphism and
      \[
        p=[(x,\xi)\mapsto (g(x^2+\xi^2-2I_0))^2]\circ \phi.
      \]
    \item The volume of the compact component $K_i$ of $\RM^2\setminus
      \Omega_1$ is equal to the volume of its image by $\phi$.
    \item $\phi$ is identity outside a large ball $B(0,R)$
    \item The volume of $K_e$, the infinite component of $\RM^2\setminus
      \Omega_1$ intersected with $B(0,R)$, is equal to the volume of
      its image by $\phi$.
    \end{itemize}
    It only remains to modify $\phi$ into a volume-preserving
    transformation. To this end, we will apply the Moser trick.

    On $\phi(K_i)$, consider the standard volume form $\omega_{st}$,
    and the pushed-forward volume form $\phi^*\omega_{st}$. These two
    forms coincide on a neighbourhood of the boundary and have same integral. The interpolation \[[0,1]\ni t\mapsto \omega_t=t
      \omega_{st}+(1-t)\phi^*\omega_{st}\]
    yields a family of exact symplectic forms: every (closed) $2$-form with zero integral is exact. By Moser's
    argument, there exists a diffeomorphism $\psi_i:\phi(K_i)\to
    \phi(K_i)$, sending $\phi^*\omega_{st}$ to $\omega_{st}$, and
    equal to identity near the boundary. In particular, one can
    correct $\phi$ into a symplectic change of variables on $K_i$,
    without modifying $\phi$ near the boundary of $K_i$.

    To conclude, we play the same game on $\phi(K_e)$.
  \end{demo}

  \begin{rema}
   In the previous Proposition, if a ball $B(0,c)$ lies inside
      the compact component of $\RM^2\setminus \gamma$, one can impose
      that $\sigma$ is equal to identity on $B(0,c-\epsilon)$. Indeed,
      in this case, one can prescribe that $\phi_i$ is the identity on
      $B(0,c-\epsilon/2)$, and the corrections in the rest of the
      proof preserve the fact that $\phi_i$ is the identity on
      $B(0,c-\epsilon)$.
\end{rema}

  \begin{prop}\label{prop:semiglobal-ham-T*S1}
    Let $p:T^*S^1\to \RM$ admitting a non-degenerate well along a curve
    $\gamma$. Suppose that $\gamma$ is non-contractible.

    Then there exists $\epsilon>0$ and a Hamiltonian diffeomorphism $\sigma:T^*S^1\to
    T^*S^1$, equal to the identity outside of a compact set, such
    that, for all $(x,\xi)\in T^*S^1$,
    \[
      \xi\in (I_0-\epsilon,I_0+\epsilon)\Rightarrow p\circ\sigma
        (x,\xi)=b_0+(g(\xi-I_0))^2.
        \]
  \end{prop}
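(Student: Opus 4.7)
My plan is to run the three-step scheme of Proposition \ref{prop:semiglobal-ham-R2} — local normal form, smooth extension with matched areas, Moser correction — adapted to the non-simply-connected setting of $T^*S^1$, and then to verify afterwards that the resulting compactly supported symplectomorphism is indeed Hamiltonian. The specific choice of target curve $\{\xi=I_0\}$ is dictated by the Hamiltonian invariance of the Liouville class proved just above: any compactly supported Hamiltonian diffeomorphism must preserve $\int_\gamma\alpha=2\pi I_0$, which forces the horizontal-line image of $\gamma$ to be exactly $\{\xi=I_0\}$ and no other.

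First I would apply Proposition \ref{prop:class-normal-form} and shift the action variable by $I_0$ to obtain a symplectomorphism $\sigma_0$ from a neighborhood of $\{\xi=I_0\}$ onto a neighborhood $\Omega_0$ of $\gamma$, satisfying $p\circ\sigma_0(x,\xi)=b_0+(g(\xi-I_0))^2$. Choosing $K$ large enough that $\Omega_0\cup\gamma\subset\{|\xi|<K\}$, I would set $\sigma=\mathrm{id}$ outside $\{|\xi|\leq K\}$ and interpolate on the two closed sub-annuli bounded by $\{\xi=\pm K\}$ and by $\gamma$ (respectively $\{\xi=I_0\}$). Since $\gamma$ and $\{\xi=I_0\}$ are smoothly isotopic non-contractible loops of winding number one, elementary topology of the annulus produces a smooth diffeomorphism $\phi$ on each side extending $\sigma_0$ and equal to the identity at $\{|\xi|=K\}$. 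The key point, which is the counterpart in the $T^*S^1$ setting of the volume check in Proposition \ref{prop:semiglobal-ham-R2}, is area matching: the upper model sub-annulus $\{I_0\leq\xi\leq K\}$ has area $2\pi(K-I_0)$ by direct integration, and the upper component of $\{|\xi|\leq K\}\setminus\gamma$ has area $\int_{\{\xi=K\}}\alpha-\int_\gamma\alpha=2\pi(K-I_0)$ by Stokes, and similarly for the lower pair. Pre-composing $\phi$ with a boundary-trivial area-preserving correction on each annulus, I can arrange that $\phi^*\omega_{st}$ and $\omega_{st}$ have equal integrals there.

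Moser's argument then applies on each annulus, producing an interior correction that upgrades $\phi$ into a symplectomorphism $\sigma$ of $T^*S^1$, still equal to the identity outside $\{|\xi|\leq K\}$. The last task is to check that $\sigma$ is Hamiltonian, not merely symplectic: the closed, compactly supported 1-form $\sigma^*\alpha-\alpha$ has vanishing period over the generator $\{\xi=K+1\}$ of $H_1(T^*S^1)$ on which $\sigma$ equals the identity, and so it is exact, $\sigma^*\alpha-\alpha=\dd H$ for a smooth $H$; by the standard exact-form criterion on exact symplectic manifolds, $\sigma$ is then the time-one map of a Hamiltonian isotopy from the identity. The hard step of the plan is the area-matching part; that it can be carried out is precisely the content of the hypothesis that the target is $\{\xi=I_0\}$ for $I_0$ equal to the Liouville invariant of $\gamma$, so the topology enters exactly where one expects it to.
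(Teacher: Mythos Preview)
Your argument is correct and self-contained, but the paper takes a shorter route: rather than rerunning the local-normal-form/extension/Moser scheme directly on $T^*S^1$, it maps a large closed annulus $S^1\times[-2R,2R]$ symplectically onto the annulus $\{R\leq x^2+\xi^2\leq 9R\}\subset\RM^2$ via $(\theta,I)\mapsto\bigl(\sqrt{2(I+5R/2)}\cos\theta,\sqrt{2(I+5R/2)}\sin\theta\bigr)$ and then simply invokes Proposition~\ref{prop:semiglobal-ham-R2} (together with the remark following it, so that the resulting map is the identity on an inner disk as well as outside a large ball). This buys brevity and, more to the point, makes the Hamiltonian property automatic: on $\RM^2$ every compactly supported symplectomorphism is Hamiltonian since $H^1(\RM^2)=0$, and the generating time-dependent Hamiltonian can be taken supported in the annulus, hence transports back to $T^*S^1$. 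Your direct approach, by contrast, makes the role of $I_0$ in the area-matching step completely explicit, which is a genuine expository gain. One small caveat on your final step: deducing ``$\sigma$ is Hamiltonian'' from the exactness of $\sigma^*\alpha-\alpha$ is not quite the one-liner you present; you still need that $\sigma$ lies in the identity component of compactly supported symplectomorphisms of $T^*S^1$ (so that a flux is defined) and that the flux of any connecting isotopy is represented by $[\sigma^*\alpha-\alpha]$. Both facts hold here, but a sentence acknowledging them would tighten the write-up.
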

  \begin{demo}
    Let $R>0$; consider the following symplectomorphism from
    $S^1\times [-2R,2R]$ to $\{(x,\xi)\in \RM^2,R\leq x^2+\xi^2\leq
    9R\}$:
    \[
      (\theta,I)\mapsto \{(\sqrt{2(I+5R/2)}\cos(\theta),\sqrt{2(I+5R/2)}\sin(\theta))\}.
    \]
    Through this symplectomorphism, we are reduced to Proposition \ref{prop:semiglobal-ham-R2}: because of the volume considerations, one can extend the
    symplectic normal form given by Proposition
    \ref{prop:class-normal-form} to a hamiltonian change of variables,
    equal to identity outside of $\{(x,\xi)\in \RM^2,R\leq x^2+\xi^2\leq
    9R\}$.
  \end{demo}

  The symplectic change of variables at the beginning of the last
  proof can be quantized; this will allow us in Section
   to quantize the normal
  form \ref{prop:class-normal-form} into a unitary operator, up to
  $\bigO(\hbar)$ error, but where $I$ is replaced with $I-I_0$. Improving
  this $\bigO(\hbar)$ error is the topic of the next section.
\section{Formal perturbations}
\label{sec:formal-perturbations}
Suppose that $p$ admits a non-degenerate well along $\gamma$, with
$p(\gamma)=b_0$, and let
\[
p_\epsilon := p + \epsilon p_1,
\]
where $p_1$ is smooth.  We consider infinitesimal Hamiltonian
deformations of $p$, \emph{i.e.} functions of the form
$\exp(\epsilon\ad{a})p = p_\epsilon + \epsilon\{a,p_\epsilon\} +
\mathcal{O}(\epsilon^2)$,
where the generator of the deformation is the smooth function $a$ (and
$\ad{a}(h):=\{a,h\} = -\ad{p}(a)$).  We have
\[
\exp(\epsilon\ad{a})p_\epsilon = p + \epsilon(p_1 + \{a,p\}) +
\mathcal{O}(\epsilon^2).
\]
This leads to the study of the cohomological equation $\{a,p\}=r$
where $r$ is given and $a$ is unknown. As in the previous section, we
let $f$ be the smooth branch of $\sqrt{p-b_0}$.
\begin{lemm}
\label{lemm:ad}
  There exists a neighborhood $\hat\Omega$ of $\gamma$ on which, for
  any $h\in\Cinf(\hat\Omega)$, the following holds.
  \begin{enumerate}
  \item $h\in \ker \ad{p}$ if and only if $h = q\circ f$ for some
    smooth function $q$.
  \item $h\in\ad{p}(\Cinf(\hat\Omega))$ if and only if
    \begin{enumerate}
    \item for all $\delta\in\mathcal{C}$, $\moy{h}_\delta=0$ and
    \item $h_{\restr \gamma} = 0$.
    \end{enumerate}
  \end{enumerate}
\end{lemm}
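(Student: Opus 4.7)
The plan is to carry out the whole analysis in the folded action-angle coordinates $(\theta,I)$ supplied by Proposition~\ref{prop:class-normal-form}, in which $p=b_0+g(I)^2$ with $g(0)=0$ and $g'(0)\neq 0$. After shrinking $\hat\Omega$ so that $g'$ remains non-vanishing, the smooth branch of $\sqrt{p-b_0}$ is $f(\theta,I)=g(I)$. A one-line computation with the Poisson bracket $\{\cdot,\cdot\} = \partial_\theta\,\partial_I - \partial_I\,\partial_\theta$ gives the key identity
\[
\ad{p}(h) \;=\; \{p,h\} \;=\; -2\,g(I)\,g'(I)\,\partial_\theta h,
\]
on $\hat\Omega$, and both assertions reduce to solving an elementary transport equation in $\theta$, with $I$ as parameter.

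For (1), if $h=q\circ f=q\circ g(I)$ then $\partial_\theta h=0$, so $\ad{p}(h)=0$. Conversely, $\ad{p}(h)=0$ forces $\partial_\theta h\equiv 0$ on $\{I\neq 0\}$ (where the prefactor $g(I)g'(I)$ is non-zero), and hence on all of $\hat\Omega$ by continuity; thus $h$ depends only on $I$, say $h=H(I)$. Since $g$ is a local diffeomorphism at $0$, the function $q:=H\circ g^{-1}$ is smooth near $0$, and $h=q\circ f$.

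For (2), necessity is immediate from the identity: averaging over a period in $\theta$ kills $\partial_\theta a$, giving $\moy{h}_\delta=0$; restricting to $\gamma=\{I=0\}$ kills the $g(I)$ prefactor, giving $h_{\restr\gamma}=0$. For sufficiency, assume (a) and (b) and try to define $a$ by $\partial_\theta a = -h/(2g(I)g'(I))$. The whole difficulty — indeed the only non-routine point — is that the right-hand side is a priori singular along $\gamma$. Here is where the two hypotheses combine: by (b) and Hadamard's lemma, $h=I\,\tilde h$ for some smooth $\tilde h$, while $g(I)/I$ extends smoothly to a non-vanishing function near $0$ since $g'(0)\neq 0$; hence $s(\theta,I):=-h/(2g(I)g'(I))$ is smooth on $\hat\Omega$.

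It remains to verify that $s$ admits a $2\pi$-periodic primitive in $\theta$. Condition (a) states precisely that $\int_0^{2\pi} s(\theta,I)\,\dd\theta=0$ for every $I$ close to $0$ (the identity $\int h\,\dd\theta=0$ for $I\neq 0$ extends to $I=0$ by continuity). Therefore $a(\theta,I):=\int_0^\theta s(\theta',I)\,\dd\theta'$ is smooth and $2\pi$-periodic in $\theta$, and $\ad{p}(a)=h$ by construction. The main obstacle is thus concentrated in the smoothness of the division by $g(I)g'(I)$ across $\gamma$, which is exactly what the Morse-Bott non-degeneracy of the well and the vanishing hypothesis~(b) provide.
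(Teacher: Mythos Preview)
Your proof is correct and follows the same strategy as the paper: work in the action-angle coordinates of Proposition~\ref{prop:class-normal-form}, reduce $\ad{p}$ to the operator $-2g(I)g'(I)\partial_\theta$, and solve the resulting transport equation in $\theta$ with $I$ as a parameter.

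The only difference is in the execution of part~(2). The paper expands $h$ and the unknown $a$ in Fourier series in $\theta$ and solves mode by mode, observing that $\ad{p}$ is diagonal in this basis; the condition for solvability is then $h_0\equiv 0$ (condition~(a)) and $h_k(0)=0$ for $k\neq 0$ (condition~(b)). You instead divide directly by $2g(I)g'(I)$, using Hadamard's lemma and the non-degeneracy $g'(0)\neq 0$ to absorb the apparent singularity at $I=0$, and then integrate in $\theta$. Your route is marginally cleaner in that smoothness of $a$ is immediate from the integral formula, whereas the Fourier approach implicitly relies on the fact that the coefficients $a_k=h_k/(ik\,g'g)$ still decay rapidly enough to sum to a smooth function (true, but not spelled out in the paper). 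Conversely, the Fourier viewpoint makes the direct-sum decomposition in the subsequent Corollary more transparent.
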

\begin{demo}
  \begin{enumerate}
  \item In the action-angle variables of Proposition
    \ref{prop:class-normal-form} one has
    \[
      p:(\theta,I)\mapsto b_0+(g(I))^2,
    \]
    where $g:(\RM,0)\to (\RM,0)$ is a smooth diffeomorphism.
    
    On $\Omega_2$, one has
    \[
      \{p,h\}=2g'(I)g(I)\partial_{\theta}h(\theta,I).
    \]
    In particular, $\{p,h\}=0$ if and only if $h$ depends only on $I$,
    that is, $h=q\circ f$ for some $f\in C^{\infty}(\RM,\RM)$.
  \item Let us decompose $h\in C^{\infty}(\Omega_2,\RM)$ in Fourier series in $\theta$:
    \[
      h:(\theta,I)\mapsto \sum_{k\in \ZM}h_k(I)e^{ik\theta}.
    \]
    We search for $a\in C^{\infty}(\Omega_2,\RM)$, of the form
    \[
      a:(\theta,I)\mapsto \sum_{k\in \ZM}a_k(I)e^{ik\theta}.
    \]
    such that
    \[
      \{a,p\}=h.
    \]
    One can compute
    \[
      \{a_k(I)e^{ik\theta},p\}=ik g'(I)g(I)a_k(I)e^{ik\theta}.
    \]
    The action of $\ad{p}$ is diagonal with respect to the Fourier
    series decomposition; $h$ belongs to its image if and only if
    $h_0=0$ and for
    every $k\neq 0$, $h_k$ belongs to the ideal generated by $g$, that
    is, $h_k(0)=0$. This concludes the proof.
  \end{enumerate}
\end{demo}

Let $\pi_\theta:\hat\Omega\to\gamma$ be given by
$(\theta,I)\mapsto \theta$. The space of functions that depend only on
$\theta$ is then denoted $\pi_\theta^*\Cinf(\gamma)$.

Inside $\ker \ad{p}$, let $(\ker \ad{p})_0$ denote the subspace of
functions vanishing on $\gamma$.
\begin{coro}
  There is a direct sum decomposition
  \begin{equation}\label{equ:direct}
    \Cinf(\hat\Omega) = (\ker \ad{p})_0 \oplus \ad{p}(\Cinf(\hat\Omega))
    \oplus \pi_\theta^*\Cinf(\gamma).
  \end{equation}
\end{coro}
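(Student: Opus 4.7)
The plan is to combine the two natural linear operators on $\Cinf(\hat\Omega)$ at our disposal---the leafwise average $h\mapsto\moy{h}$ and the restriction $h\mapsto h_{\restr \gamma}$---to build explicit projectors onto each of the three summands. By Lemma~\ref{lemm:ad}, in the action-angle chart these summands have transparent descriptions: $(\ker\ad{p})_0$ consists of smooth functions of $I$ alone vanishing at $I=0$, $\ad{p}(\Cinf(\hat\Omega))$ consists of functions with vanishing leafwise $\theta$-average \emph{and} vanishing on $\gamma$, and $\pi_\theta^*\Cinf(\gamma)$ consists of functions of $\theta$ alone.

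For existence, given $h \in \Cinf(\hat\Omega)$ I would first peel off the $\theta$-dependent part along $\gamma$ by setting $h_3(\theta) := h(\theta,0) \in \pi_\theta^*\Cinf(\gamma)$ and $h' := h - h_3$, so that $h'_{\restr \gamma} \equiv 0$. Averaging then produces $h_1 := \moy{h'}$, which lies in $\ker\ad{p}$ by part~(1) of Lemma~\ref{lemm:ad}; since $h'$ vanishes on $\gamma$ we have $h_1(0)=0$, so in fact $h_1 \in (\ker\ad{p})_0$. Setting $h_2 := h' - h_1$ completes the decomposition: by construction $\moy{h_2}=0$, and $h_2$ still vanishes on $\gamma$, so part~(2) of the lemma places $h_2$ in $\ad{p}(\Cinf(\hat\Omega))$.

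For uniqueness, given any relation $h_1 + h_2 + h_3 = 0$ with $h_i$ in the respective summands, I would apply $\moy{\cdot}$ to kill $h_2$ and obtain $h_1(I) = -\moy{h_3}$, which is constant in $I$. Combined with $h_1(0)=0$, this forces $h_1\equiv 0$ and $\moy{h_3}=0$. Restricting the remaining equality $h_2 + h_3 = 0$ to $\gamma$ then gives $h_3\equiv 0$ (since $h_2$ vanishes on $\gamma$), and hence $h_2\equiv 0$.

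No serious obstacle is anticipated: the argument is essentially bookkeeping once Lemma~\ref{lemm:ad} is in hand. The one point that merits care is that $\moy{h'}$, a priori a function on the leaf space $\mathcal{C}$, must be viewed as a smooth function on $\hat\Omega$; this is immediate from~\eqref{equ:moy} together with the fact that the action-angle coordinates of Proposition~\ref{prop:class-normal-form} identify $\mathcal{C}$ with an interval in the $I$-variable.
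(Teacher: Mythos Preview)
Your proof is correct and yields exactly the same decomposition as the paper's: writing $h$ in Fourier series $h=\sum_k h_k(I)e^{ik\theta}$, your $h_3$, $h_1$, $h_2$ are precisely the paper's $\sum_k h_k(0)e^{ik\theta}$, $h_0(I)-h_0(0)$, and $\sum_{k\neq 0}(h_k(I)-h_k(0))e^{ik\theta}$, since the leafwise average $\moy{\cdot}$ picks out the zero Fourier mode and restriction to $\gamma$ evaluates at $I=0$. Your version is phrased intrinsically and, usefully, supplies the uniqueness argument that the paper omits.
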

\begin{demo}
  Let us write again $h$ as a Fourier series in $\theta$:
  \[
    h:(\theta,I)\mapsto \sum_{k\in \ZM}h_k(I)e^{ik\theta}.
  \]
  We decompose $h=h_1+h_2+h_3$, where
  \begin{align}
    (\ker \ad{p})_0\ni h_1:(\theta,I)&\mapsto h_0(I)-h_0(0)\\
    \ad{p}(C^{\infty}(\Omega_2))\ni h_2:(\theta,I)&\mapsto \sum_{k\in
                                                   \ZM^*}(h_k(I)-h_k(0))e^{ik\theta}\\
    \pi_{\theta}^*C^{\infty}(\gamma)\ni h_3:(\theta,I)&\mapsto
    \sum_{k\in \ZM}h_k(0)e^{ik\theta}.
  \end{align}
  This concludes the proof.
\end{demo}
% \begin{demo}
%   The easiest is to work with Fourier expansions, but here is a
%   coordinate-free proof.  Let $k\in \ker \ad{p}$,
%   $h\in\ad{p}(\Cinf(\hat\Omega))$ and $V = \pi_\theta^*\tilde V$, with
%   $\tilde V\in\Cinf(\gamma)$ and $\moy{V}_\gamma=0$.  By Lemma~\ref{lemm:ad},
%   $k=g\circ f$. Hence by~\eqref{equ:moy}, $\moy{k}_\delta = k$. Since
%   $\moy{h}_\delta = 0 = \moy{V}_\delta$, we see that
%   $k+h+V=0 \implies k = 0$. Now we restrict to $\gamma$ and see, since
%   $h_{\restr \gamma}=0$, that
%   $h+V=0 \implies \tilde V=0 \implies V=0$. Thus the sum on the
%   right-hand side of~\eqref{equ:direct} is direct.

%   Now let $r\in \Cinf(\hat\Omega)$. We define
%   $\tilde V := r_{\restr \gamma} - \moy{r}_\gamma$, $k:=\moy{r}$, and
%   $h:= r - k - \pi_\theta^*\tilde V$. We have $\moy{h}_\gamma = 0$ and
%   \[
%   h_{\restr \gamma} = r_{\restr \gamma} - \moy{r}_\gamma - \tilde V =
%   0,
%   \]
%   thus $h\in \ad{p}(\Cinf(\hat\Omega))$, by Lemma~\ref{lemm:ad}. This
%   shows that $r$, which is equal to $k+h+ \pi_\theta^*\tilde V$,
%   belongs to the sum on the right-hand side of~\eqref{equ:direct}.
% \end{demo}

In particular, we obtain the following:
\begin{prop}\label{prop:coh-eq}
  Given any $r\in \Cinf(\hat\Omega)$, there exists
  $a\in \Cinf(\hat\Omega)$, $q\in\Cinf(\RM, b_0)$ with $q(0)=0$, and
  $V\in \pi_\theta^*\Cinf(\gamma)$, such that
  \begin{equation}
    \{p,a\} = r - q\circ f - V.
  \end{equation}
\end{prop}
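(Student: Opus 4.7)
The plan is to deduce the desired identity directly from the three-term direct sum decomposition~\eqref{equ:direct}. Rewriting the target equation as
\[
  r = \{p,a\} + q\circ f + V,
\]
the right-hand side is tailor-made to match~\eqref{equ:direct}: the term $\{p,a\}=\ad{p}(a)$ lies in $\ad{p}(\Cinf(\hat\Omega))$, the term $q\circ f$ with $q(0)=0$ lies in $(\ker\ad{p})_0$ by Lemma~\ref{lemm:ad}(1), and $V$ lies in $\pi_\theta^*\Cinf(\gamma)$ by definition.

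Concretely, I would first apply the preceding corollary to split $r=h_1+h_2+h_3$ with $h_1\in(\ker\ad{p})_0$, $h_2\in\ad{p}(\Cinf(\hat\Omega))$, and $h_3\in\pi_\theta^*\Cinf(\gamma)$. I set $V:=h_3$. By Lemma~\ref{lemm:ad}(1), one has $h_1=q\circ f$ for some smooth $q$, and the fact that $h_1$ vanishes on $\gamma=\{f=0\}$ forces $q(0)=0$. Finally, the membership $h_2\in\ad{p}(\Cinf(\hat\Omega))$ means by definition that some $a\in\Cinf(\hat\Omega)$ satisfies $\ad{p}(a)=\{p,a\}=h_2$. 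Combining the three choices yields $\{p,a\}=r-q\circ f-V$, which is the claim.

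No serious obstacle appears at this stage: the genuine work has already been performed in Lemma~\ref{lemm:ad} (the Fourier-series analysis of $\ad{p}$ in action-angle coordinates) and in its corollary (the direct sum decomposition). The present proposition is essentially a repackaging of that decomposition in the form most convenient for the cohomological equation, and it will serve in the next section as the engine for iteratively correcting the classical normal form. The one tiny point worth noting in writing is that the branch of $f=\sqrt{p-b_0}$ is globally smooth on $\hat\Omega$ because in the folded action-angle coordinates of Proposition~\ref{prop:class-normal-form} one may take $f=g(I)$, so the composition $q\circ f$ is unambiguously smooth.
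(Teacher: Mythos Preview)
Your proposal is correct and matches the paper's approach exactly: the paper presents this proposition with the words ``In particular, we obtain the following'' immediately after the direct-sum corollary, treating it as an immediate repackaging of~\eqref{equ:direct} without giving a separate proof. Your explicit identification of the three summands with $q\circ f$, $\{p,a\}$, and $V$ is precisely the intended reading.
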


By induction, this leads to the following Birkhoff normal form.
\begin{theo}\label{theo:birkhoff}
  Let $p_\epsilon$ be a formal deformation of $p$:
  \[
  p_\epsilon \sim p + \epsilon p_1 + \epsilon^2p_2 + \cdots
  \]
  There exists a symplectic diffeomorphism $\phy_\epsilon$ in a
  neighborhood of $\gamma$, depending smoothly on $\epsilon$, such
  that
  \begin{equation}
    \phy_\epsilon^* p_\epsilon = b_0 +
    (g_{\epsilon}\circ f)^2 + \epsilon V_\epsilon + \mathcal{O}(\epsilon^\infty),
  \end{equation}
  where $g_\epsilon\in\Cinf(\RM,0)$,
  $V_\epsilon = \pi_\theta^*\widetilde V_\epsilon$ for some
  $\widetilde V_\epsilon\in\Cinf(\gamma)$; moreover both $g_\epsilon$
  and $\widetilde V_\epsilon$ (and hence $V_\epsilon$) admit an asymptotic
  expansion in integer powers of $\epsilon$ (for the $\Cinf$
  topology), and moreover $g_{\epsilon}=g+\bigO(\epsilon)$ and $g_{\epsilon}(0)=g(0)$.
 
  In other words, there exists canonical coordinates
  $(\theta,I)\in T^* S^1$ in which
  \[
  p_\epsilon(\theta,I) = b_0 + (g_{\epsilon}(I))^2 + \epsilon
  V_\epsilon(\theta) + \mathcal{O}(\epsilon^\infty).
  \]
\end{theo}
\begin{demo}
  By the classical normal form we may assume that
  $p=b_0+(g\circ f)^2$. Suppose by induction that
  \begin{equation}
    \phy_\epsilon^* p_\epsilon = b_0 +
    (g_{\epsilon}\circ f)^2 + \epsilon V_\epsilon +\epsilon^Nr,
  \end{equation}
  for some $N\geq 1$ (if $N=1$ we choose $g_\epsilon=g$ and
  $V_\epsilon=0$).

  Let $(a,q,V)$ be as in Proposition \ref{prop:coh-eq}.
  We have
  \[\exp(\epsilon^N\ad{a}) \phy_\epsilon^*p_\epsilon =
  \phy_\epsilon^*p_\epsilon +
  \epsilon^N\{a,\phy_\epsilon^*p_\epsilon\} +
  \mathcal{O}(\epsilon^{2N}).\] Hence
  \begin{equation}
    \exp(\epsilon^N\ad{a})\phy_\epsilon^*p_\epsilon =  
    b_0+(g_{\epsilon}\circ f)^2 + \epsilon V_\epsilon 
    + \epsilon^N (r + \{a, p\}) + \mathcal{O}(\epsilon^{N+1}),
  \end{equation}
  with
  \[
    r+\{a,p\}=q\circ f + V
  \]
  where $q(0)=0$.
  
 Hence
\begin{align}
\label{equ:birkhoff}
\exp(\epsilon^N\ad{a})\phy_\epsilon^*p_\epsilon &=
b_0+(g_{\epsilon}\circ f)^2 + \epsilon^Nq\circ f + \epsilon (V_\epsilon +
                                                  \epsilon^{N-1}V) + \mathcal{O}(\epsilon^{N+1})\\
  &=
b_0+\left[\left(g_{\epsilon}+\epsilon^N\frac 12 q\right)\circ f\right]^2 + \epsilon (V_\epsilon +
                                                  \epsilon^{N-1}V) + \mathcal{O}(\epsilon^{N+1})
\end{align}
Finally if we assumed that $\phy_\epsilon$ was the time-one flow of a
Hamiltonian $a_\epsilon$, we see that the left-hand side
of~\eqref{equ:birkhoff} is the flow of the Hamiltonian
$a_\epsilon + \epsilon^N a$ modulo $\mathcal{O}(\epsilon^{N+1})$. This
proves the induction step.
\end{demo}

\section{Semiclassical normal form}
\label{sec:semicl-norm-form}

\subsection{Quantum maps}
\label{sec:quantum-maps}

In order to quantize the results of Section
\ref{sec:morse-bott-functions}, we need a proper notion of quantum map
corresponding to a symplectic change of variables.

In the whole of this section, to simplify notation, we will use the
subscript $\hbar$ to denote that an object depends on a parameter $\hbar$ belonging to a punctured neighbourhood of zero within a closed subset of $\RM^+$.

\begin{defi}
  Let $(M^1,\sigma^1,H^1_{\hbar},Op^1_{\hbar})$ and
  $(M^2,\sigma^2,H^2_{\hbar},Op^2_{\hbar})$ be two quantization
  procedures: for $i=1,2$, $(M^i,\sigma^i)$ are symplectic manifolds, $H^i_{\hbar}$
  are ($\hbar$-dependent) Hilbert spaces and
  $Op^i_{\hbar}:C^{\infty}_c(M,\CM)\to B(H^i_{\hbar})$ realise
  formal deformations of the Poisson algebras
  $C^{\infty}_c(M^i,\CM)$. The functors $Op^i_{\hbar}$ yield natural
  notions of $\hbar$-wave front set for families of elements of $H^i_{\hbar}$.

  A {\bfseries quantum map} consists of data
  $(U_{\hbar},\Omega_1,\Omega_2,\sigma)$, where $\Omega_1,\Omega_2$
  are respectively open subsets of $M_1$ and $M_2$,
  $\sigma:\Omega_1\to \Omega_2$ is a smooth and proper symplectomorphism, and
  $U_{\hbar}:H^1_{\hbar}\to H^2_{\hbar}$ satisfies the following
  properties:
  \begin{enumerate}
  \item For every $K\subset \subset \Omega_1$, for every $u_{\hbar}\in H^1$ with
    $\|u_{\hbar}\|_{H^1}=1$ and $WF_{\hbar}(u_{\hbar})\subset K$, one has
    \[
      \|U_{\hbar}u_{\hbar}\|_{H^2}=1+\bigO_K(\hbar^{\infty}).
    \]
  \item For every $K\subset \subset \Omega_2$, for every $v_{\hbar}\in H^2$ with
    $\|v_{\hbar}\|_{H^2}=1$ and $WF_{\hbar}(v_{\hbar})\subset K$, one has
    \[
      \|U_{\hbar}^*v_{\hbar}\|_{H^1}=1+\bigO_K(\hbar^{\infty}).
    \]
  \item For every $a\in C^{\infty}_c(M_2,\RM)$, there exists a sequence
    $(b_k)_{k\geq 0}\in \left[C^{\infty}_c(M_1,\RM)\right]^{\NM_0}$,
    such that $b_0=a\circ \sigma$ and for all $K\subset \subset
    \Omega_2$, for every $v_{\hbar}\in H^2$ with
    $\|v_{\hbar}\|_{H^2}=1$ and $WF_{\hbar}(v_{\hbar})\subset K$, one
    has
    \[
      U_{\hbar}Op_{\hbar}^2(a)U_{\hbar}^*v=\sum_{k=0}^{\infty}\hbar^{-k}Op_{\hbar}^1(b_k)v+O(\hbar^{\infty}).
      \]
    \end{enumerate}
    A linear operator $U_{\hbar}$ satisfying conditions 1 and 2 above
    will be called a microlocal unitary transform.
\end{defi}
A broad class of examples of quantum maps is given by the Egorov
Theorem (see \cite{zworski_semiclassical_2012}). Indeed, if
$(M^1,\omega^1)=(M^2,\omega^2)=T^*X$ where $X$ is a smooth, compact
manifold, if $Op$ is the Weyl quantization, and if $\sigma$ is a
global Hamiltonian transformation (corresponding to a time-dependent
Hamiltonian $H(t)$ for $t\in [0,1]$), then one can construct
$U_{\hbar}$ as follows: for $u_0\in L^2(X)$, $U_{\hbar}u_0$ is the
solution at time $t=1$ of the differential equation $i\hbar
\partial_tu(t)=Op^W_{\hbar}(H(t))u(t)$ with initial value
$u(0)=u_0$. This procedure also works in more general quantization contexts.

In this section, we will use two particular quantum maps from
$T^*S^1$ to $\RM^2$, which we define now.

\begin{defi}
  Let $\Omega_1=S^1\times \RM^+_*$ and $\Omega_2=\RM^2\setminus \{0\}$
  be respectively open sets of $T^*S^1$ and $\RM^2$. Let
  $\sigma:\Omega_1\to \Omega_2$ be defined as
  \[
(\theta,I)\mapsto (\sqrt{2I}\cos(\theta),\sqrt{2I}\sin(\theta)).
\]

  For $\hbar>0$ and $k\in \NM_0$, let $\phi_{k,\hbar}\in L^2(\RM)$ denote the
  $k$-th Hermite eigenfunction of the $\hbar$-harmonic oscillator,
  defined by the following induction relation:
  \begin{align}
    \phi_{0,\hbar}&:x\mapsto \frac{1}{\sqrt{2\pi
        \hbar}}e^{-\frac{x^2}{2\hbar}}\\
    \phi_{k+1,\hbar}&=\frac{1}{\hbar\sqrt{2(k+1)}}(-\hbar \partial + x)\phi_{k,\hbar}\qquad \qquad
                      \text{ for }k\geq 0.
  \end{align}
  
  The toric quantum map
  $(\mathcal{T}_{\hbar},\Omega_1,\Omega_2,\sigma)$ is defined by its
  action on the Fourier basis as
  \[
    \mathcal{T}_{\hbar}(\theta\mapsto
    e^{ik\theta})=\begin{cases}\phi_{k,\hbar}&\text{ if } k\geq 0\\
      0&\text{ if } k<0.
    \end{cases}
  \]
\end{defi}
\begin{prop}
  The toric quantum map is indeed a quantum map.
\end{prop}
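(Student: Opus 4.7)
The starting point is that (up to normalisation constants) the Hermite functions $(\phi_{k,\hbar})_{k\geq 0}$ form a complete orthonormal basis of $L^2(\RM)$, so $\mathcal{T}_\hbar$ is a partial isometry whose range is all of $L^2(\RM)$ and whose kernel is the negative-Fourier-mode subspace $L^2_-(S^1)=\overline{\operatorname{span}}\{e^{ik\theta}:k<0\}$. Equivalently $\mathcal{T}_\hbar\mathcal{T}_\hbar^*=\operatorname{Id}_{L^2(\RM)}$ and $\mathcal{T}_\hbar^*\mathcal{T}_\hbar=P_+$, the spectral projector of $\hbar D_\theta$ onto $[0,+\infty)$. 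Condition~2 then follows immediately, with exact equality rather than an $\bigO(\hbar^\infty)$ error. For condition~1, fix a compact $K\subset\Omega_1=S^1\times \RM^+_*$; then $K\subset\{I\geq c\}$ for some $c>0$, and applying functional calculus to the operator $\hbar D_\theta$ (whose symbol is $I$) with a cutoff equal to $1$ on $(-\infty,0]$ and vanishing on $[c,+\infty)$ gives $(\operatorname{Id}-P_+)u_\hbar=\bigO(\hbar^\infty)$ whenever $WF_\hbar(u_\hbar)\subset K$; hence $\|\mathcal{T}_\hbar u_\hbar\|^2=\langle P_+u_\hbar,u_\hbar\rangle=1+\bigO(\hbar^\infty)$.

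The heart of the proof is condition~3. I would start from the identity that the semiclassical harmonic oscillator $\hat H_\hbar:=\tfrac12(x^2+\hbar^2 D_x^2)=\operatorname{Op}^W_\hbar\bigl(\tfrac12(x^2+\xi^2)\bigr)$ satisfies $\hat H_\hbar\phi_{k,\hbar}=\hbar(k+\tfrac12)\phi_{k,\hbar}$, as is checked by induction from the raising-operator recurrence defining $\phi_{k,\hbar}$. Since $h_0:=\tfrac12(x^2+\xi^2)=I\circ\sigma^{-1}$, this produces the intertwining
\[
\hat H_\hbar\,\mathcal{T}_\hbar=\mathcal{T}_\hbar\,\bigl(\hbar D_\theta+\tfrac{\hbar}{2}\bigr),
\]
which is precisely the Egorov relation for the radial symbol $h_0$, with leading symbol $b_0=h_0\circ\sigma=I$ and subprincipal correction $\hbar/2$. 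The Helffer-Sjöstrand functional calculus then upgrades this to a complete semiclassical expansion for $f(\hat H_\hbar)=\operatorname{Op}^W_\hbar(f\circ h_0)+\sum_{k\geq 1}\hbar^k\operatorname{Op}^W_\hbar(r_k)$ for every $f\in C^\infty_c(\RM)$, yielding condition~3 for all radially symmetric symbols. For a non-radial $a\in C^\infty_c(\RM^2\setminus\{0\})$, I would decompose $a$ into angular Fourier modes: the Weyl quantization of each mode is a polynomial in the creation and annihilation operators $a_\hbar^\dagger=(2\hbar)^{-1/2}(x-\hbar\partial_x)$, $a_\hbar=(2\hbar)^{-1/2}(x+\hbar\partial_x)$, which act as raising and lowering operators on the Hermite basis. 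Under $\mathcal{T}_\hbar$ these intertwine with operators of principal symbol $\sqrt{2I}\,e^{\pm i\theta}\circ\sigma^{-1}$, matching $(x\mp i\xi)/\sqrt{2}$, and together with the radial functional calculus this yields the desired expansion mode by mode.

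The main obstacle is the bookkeeping of subprincipal corrections at every order: each angular Fourier component of $a$ produces a polynomial in $a_\hbar,a_\hbar^\dagger$ whose conjugation by $\mathcal{T}_\hbar$ generates $\hbar$-contributions both from the Weyl $\star$-product on $\RM^2$ and from the commutator $[a_\hbar,a_\hbar^\dagger]=1$, and these must be combined coherently to yield the $b_k$. A cleaner alternative is to recognise $\mathcal{T}_\hbar$, restricted to $L^2_+(S^1)$, as a microlocally unitary semiclassical Fourier integral operator whose canonical relation is the graph of $\sigma$, and then to invoke the standard Egorov theorem for FIOs (see \cite{zworski_semiclassical_2012}), which produces condition~3 in one stroke. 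The FIO identification itself reduces to analysing the Schwartz kernel $\sum_{k\geq 0}\phi_{k,\hbar}(x)e^{-ik\theta}$ via a Mehler-type closed form and stationary phase. Either route avoids the singularity of $\sigma$ at $I=0$, because condition~3 is only tested against vectors microlocalised in compact subsets of $\Omega_2=\RM^2\setminus\{0\}$.
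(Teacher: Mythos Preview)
Your proposal is correct and, for conditions~1 and~2, more complete than the paper's proof, which addresses only condition~3. For condition~3 the two routes share the same starting ingredient---the raising operator $-\hbar\partial_x+x$---but organise the argument differently. The paper computes $\mathcal{T}_\hbar^*(-\hbar\partial_x+x)\mathcal{T}_\hbar$ directly on the Fourier basis, recognises it as the \emph{left} quantization of $\sqrt{2I}\,\mathds{1}_{I\geq 0}\,e^{i\theta}$, converts to Weyl quantization (picking up the subprincipal corrections), and then takes real and imaginary parts to obtain the Egorov relation for the two coordinate symbols $x$ and $\xi$; Weyl $\star$-product composition then gives condition~3 for polynomials and hence for general $a\in C^\infty_c$. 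Your approach instead separates radial and angular content: the harmonic-oscillator intertwining plus Helffer--Sj\"ostrand handles all radial symbols at once, and then creation/annihilation operators handle the angular Fourier modes. This is more structured and makes the role of the functional calculus explicit, at the cost of extra bookkeeping; the paper's route is shorter because once one has the Egorov relation for $x$ and $\xi$ the Weyl calculus does the rest in one line. Your FIO alternative via Mehler's formula is also valid and is the conceptually cleanest route, though neither you nor the paper carries it out.
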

\begin{demo}
  By definition, one has, for $k\geq 0$,
  \[
    \mathcal{T}_{\hbar}^*(-\hbar \partial +
      x)\mathcal{T}_{\hbar}(\theta\mapsto
    e^{ik\theta})=(\theta\mapsto \sqrt{2}\hbar\sqrt{k+1}e^{i(k+1)\theta}).
  \]
  In other terms, if $Op^1_{\hbar}$ denotes left quantization \cite{zworski_semiclassical_2012},
  \[
    \mathcal{T}_{\hbar}^*(-\hbar \partial +
    x)\mathcal{T}_{\hbar}=Op^1_{\hbar}(\sqrt{2I}\mathds{1}_{I\geq 0}e^{i\theta}).
  \]
Weyl quantization and left quantization are equivalent for smooth
symbols. Hence, there exists $(b_k)_{k\in \NM_{>0}}\in
\left[C^{\infty}(S^1\times \RM^*_+,\RM)\right]^{\NM_{>0}}$ such that,
for all $K\subset \subset S^1\times \RM^*_+$, for all $u_{\hbar}\in
L^2(S^1)$ normalised with $WF_{\hbar}(u_{\hbar})\subset K$, one has
\[
  \mathcal{T}_{\hbar}^*(-\hbar \partial +
    x)\mathcal{T}_{\hbar}u=Op^W_{\hbar}\left(\sqrt{2I}\mathds{1}_{I\geq
        0}e^{i\theta}+\sum_{k=1}^{+\infty}\hbar^{-k}b_k(\theta,I)\right)u+\bigO_K(\hbar^{\infty}).
  \]

  Taking the symmetric and antisymmetric part yields, with the same hypotheses,
  \begin{align}
    \mathcal{T}_{\hbar}^*Op_{\hbar}^W(x)\mathcal{T}_{\hbar}u&=Op_{W}^{\hbar}\left(\sqrt{2I}\cos(\theta)+\sum_{k=1}^{+\infty}\hbar^{-k}\mathop{Re}(b_k)(\theta,I)\right)u\\
    \mathcal{T}_{\hbar}^*Op_{\hbar}^W(\xi)\mathcal{T}_{\hbar}u&=Op_{W}^{\hbar}\left(\sqrt{2I}\sin(\theta)+\sum_{k=1}^{+\infty}\hbar^{-k}\mathop{Im}(b_k)(\theta,I)\right)u.
  \end{align}
  Then, by the Weyl calculus, one can determine
  $\mathcal{T}_{\hbar}^*Op_{\hbar}(P(x,\xi))\mathcal{T}_{\hbar}$ for any
  polynomial $P$, and eventually of any smooth function.
\end{demo}

\begin{defi}
  Let $(x_0,\xi_0)\in \RM^2$ and let $r<\pi$. Let
  \begin{align}
    \Omega_1&=\{(\theta,I)\in S^1\times \RM, \mathop{dist}(\theta+2\pi
           \ZM,x_0)^2+(I-\xi_0)^2<r\}\\
    \Omega_2&=\{(x,\xi)\in \RM^2,(x-x_0)^2+(\xi-\xi_0)^2<r\}
 .
  \end{align}
  Let $\sigma_{x_0,\xi_0,r}:\Omega_1\to \Omega_2$ be defined by
  $(\theta,I)\mapsto (x_{\theta},I)$ where $x_{\theta}\in
  \theta+2\pi\ZM$ and
  $\mathop{dist}(x_{\theta},x_0)=\mathop{dist}(\theta+2\pi\ZM,x_0)$.
  Let $\chi:\RM\mapsto [0,1]$ be a smooth function equal to $1$ on a
  neighbourhood of $[-r,r]$ and to 0 on a neighbourhood of
  $\RM\setminus [-\pi,\pi]$.

  We then define $W_{x_0,\xi_0,r}:L^2(S^1)\to L^2(\RM)$ as follows:
  for $u\in L^2(S^1)$,
  \[
    W_{x_0,\xi_0,r}u:x\mapsto
    \chi(x-x_0)Op^W_{\hbar}(\mathds{1}_{(\theta,I)\in
      \Omega_1})u(x\text{ mod } 2\pi\ZM),
  \]
  and we define the unrolling quantum map as $(W_{x_0,\xi_0,r},\Omega_1,\Omega_2,\sigma)$.
  \end{defi}
The unrolling quantum map is a quantum map by definition of $Op^W$ on
$T^*S^1$.

\subsection{Quantization of the normal form}
\label{sec:quant-norm-form}

From now on, $M=T^*X$, with $X=\RM$ or $X=S^1$; our semiclassical
analysis will be concerned with Weyl quantization. The results can be
transported to other geometrical settings (manifolds with
asymptotically conic or hyperbolic ends,
Berezin-Toeplitz quantization of compact manifolds, ...) as long as one has a good notion
of ellipticity at infinity and a microlocal equivalence with Weyl
quantization, and provided that one can make sense of the invariant
$I_0$ above. One should note, however, that the Morse condition of
Section \ref{sec:morse-case} is not invariant under a change of quantization.

Let $P$ be a semiclassical
pseudo-differential operator on $X$ with a classical symbol in a
standard class,
\[
p_\h(x,\xi) = p_0(x,\xi) + \h p_1(x,\xi) + \cdots
\]
We assume that the principal symbol $p_0$ admits a non-degenerate well
on a loop $\gamma$.

We are now ready to prove Theorem \ref{theo:semicl-norm-form} 

\begin{demo}
  One proceeds as in Theorem \ref{theo:birkhoff}. The starting point
  is a quantization $U_0$ of the symplectic normal form given by Proposition
  \ref{prop:class-normal-form}. 

  In our setting, there are three possible topological situations for
  $\gamma$, and we give the three corresponding constructions of
  $U_0$.
  \begin{enumerate}
  \item If $M=\RM^2$, then $\gamma$ is contractible and one can apply
    Proposition \ref{prop:semiglobal-ham-R2}. Let $H$ be a
    (time-dependent) Hamiltonian satisfying the conditions of
    Proposition \ref{prop:semiglobal-ham-R2} (in particular, $H$ is
    constant near infinity, so it belongs to the symbol class
    $S_0$). We let $\exp(-i\hbar^{-1}\hat{H})$ be the corresponding quantum
    evolution. We now let, for $\epsilon>0$ small enough,
    \[
      U_0=\mathcal{T}_{\hbar}^*\exp(i\hbar^{-1}\hat{H}).
    \]
    
  \item If $M=T^*S^1$ and $\gamma$ is contractible, we let $\Sigma$ be
    the compact connected component of $M\setminus \gamma$, and we let
    $(B((\theta_i,\xi_i),r_i))_{i\in \mathcal{I}}$ be a finite
    covering of a contractible neighbourhood of $\Sigma$ by disks of
    radius $<\pi$, and $(\chi_i)_{i\in \mathcal{I}}$ be an associated
    partition of unity. We then let $(x_i)_{i\in \mathcal{I}}$ be a family
    of real numbers such that $[x_i]=\theta_i$ and
    $(B((x_i,\xi_i),r_i))_{i\in I}$ is a covering of a connected
    preimage $\hat{\Sigma}$ of $\Sigma$ by the rolling map. Then, we define
      \[
        W=\sum_{i\in
          \mathcal{I}}W_{x_i,\theta_i,r_i}Op_W^{\hbar}(\chi_i).
      \]
      Near $\hat{\Sigma}$, one can apply Proposition
      \ref{prop:semiglobal-ham-R2} as in the previous case, and we let
      \[
        U_0=\mathcal{T}_{\hbar}^*\exp(-i\hbar^{-1}\hat{H})W.\]
      
    \item If $M=T^*S^1$ and $\gamma$ is not contractible, then we apply
      Proposition \ref{prop:semiglobal-ham-T*S1}; if $H$ is a
      (time-dependent) Hamiltonian satisfying Proposition
      \ref{prop:semiglobal-ham-T*S1}, then we let
      \[
        U_0=\exp(-i\hbar^{-1}\hat{H}).
        \]
  \end{enumerate}
In all cases, by the Egorov theorem, there exists a sequence
$(q_k)_{k\geq 1}$ of symbols such that, for all $u$ microlocalised
in a neighbourhood of $\{\xi=I_0\}$, one has
\[Q_0u:=U_0PU_0^*u=b_0u+\left(g_0\left(\frac{\hbar}{i}\frac{\partial}{\partial
    \theta}\right)\right)^2u+\sum_{k=1}^{\infty}\hbar^{-k}Op^W_{\hbar}(q_k)u+\bigO(\hbar^{\infty}).\]

It remains to correct $U_0$ by induction, in order to get an
$\bigO(\hbar^{\infty})$ remainder. To this end, we proceed exactly as in
Theorem \ref{theo:birkhoff}, replacing $\exp(\hbar^N\ad{a})$ with
$\exp(i\hbar^{N-1}Op_W^{\hbar}(a))$ which acts the same way up to a
next-order error.

\end{demo}

\section{Low-energy spectrum under global ellipticity}
\label{sec:low-energy-spectrum}

If $\gamma$ is a global minimum for $p$, then from
Theorem \ref{theo:semicl-norm-form} one can hope to describe the
spectrum of $P_{\hbar}$ at low energies. This section is devoted to
the spectral study of $Q_{\hbar}$ under two different assumptions.

\begin{enumerate}
\item Case where $V_0$ is constant. When $\h$ varies, the
  eigenvalues are located on smooth branches (parabolas) and the
  smallest eigenvalue regularly ``jumps'' from one branch to the other
  (See Figure~\ref{fig:spectre}). In the case of Schrödinger operators
  with a strong magnetic field, this oscillatory effect is known as
  ``Little-Parks'', see Figure 1 in
  \cite{kachmar_counterexample_2019} and \cite{fournais2015lack}.
\begin{figure}[h]\label{fig:spectre}
  \centering
  \includegraphics[width=0.8\linewidth]{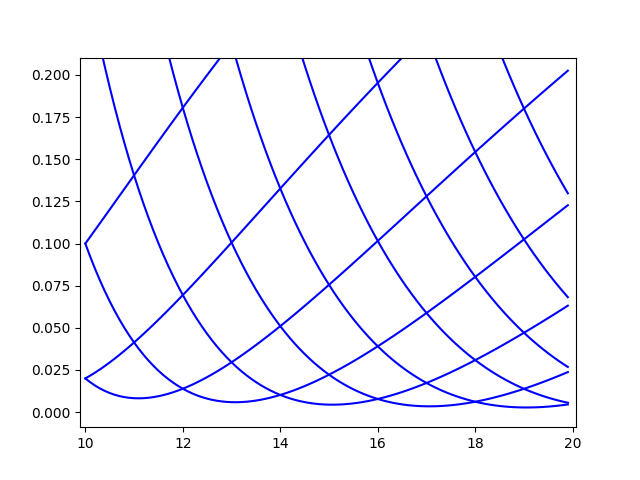}
  \caption{Small eigenvalues for the operator
    $Op_W^{\hbar}((x^2+y^2-1)^2)$ acting on $L^2(\RM)$, as a function
    of $1/\hbar$. The first eigenvalue jumps branches when $1/\hbar$
    is a multiple of $
    \frac{1}{I_0}=2$.}
  % python source code ACTUALLY WRONG BY A FACTOR 2
  % import numpy as np
  % import matplotlib.pyplot as plt
  % hinv=np.arange(10.,20.,0.1)
  % for k in range(1,15):
  %      evs=hinv**(-2)*4.*(k+1)*(k+2)-(2.+4.*hinv**(-1))*2.*hinv**(-1)*(k+1)
  %      +1.+2.*hinv**(-1)+2.*hinv**(-2)
  %      plt.plot(hinv,evs,"-",color='b')
  % plt.show()
\end{figure}

\item Generic subprincipal symbol. Then we can reduce to a
  Schrödinger-like operator with Morse potential $V$, but after a $\sqrt{\h}$
  zoom in the variable $I$. We consider the two following interesting cases.
  \begin{enumerate}
  \item local mimina of the potential: we get ``mini-wells'';
  \item local maxima: we can describe the concentration on hyperbolic
    trajectories.
  \end{enumerate}
\end{enumerate}
Before studying these assumptions, we recall that the microlocal
knowledge of $P$ near $\gamma$ is sufficient to treat low-energy
eigenvalues if the symbol is elliptic at infinity.

\subsection{Microlocal confinement}
From now on, we make the following assumption:

\begin{assu}\label{ass:p-ell}
  The curve $\gamma$ is a global minimum for $p$. Moreover, $p-b_0$ is elliptic at
  infinity in a scattering symbol class $S^{m,\ell}$ with $m\geq 0$
  and $\ell \geq 0$. That is,
  \[
    \forall (j,k)\in \NM^2,\exists C_{jk}\in \RM, \forall (x,\xi)\in T^*M,|\partial_x^j\partial_{\xi}^k
    p(x,\xi)|<C_{jk}(1+|x|)^{\ell-j}(1+|\xi|)^{m-k}.
    \]
\end{assu}

\begin{prop}\label{prop:loc-1}
  Suppose Assumption \ref{ass:p-ell} holds. Then there exists $E_0>b_0$ and $h_0>0$ such that, for
  any $\hbar<h_0$ and any eigenpair $(u,E)$ of $P$ with $E<E_0$ and
  $\|u\|_{L^2(X)}=1$, one has
  $\|Uu\|_{L^2(\SM^1)}=1+\bigO(\hbar^{\infty})$ and
  \[
    \|QUu-Uu\|_{L^2(\SM^1)}=\bigO(\hbar^{\infty}).
    \]
  \end{prop}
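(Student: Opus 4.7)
The plan is to establish (i) that any eigenfunction with eigenvalue below some $E_0$ is microlocally concentrated in the neighbourhood $\Omega$ on which the normal form of Theorem \ref{theo:semicl-norm-form} operates, and (ii) that the eigenvalue equation then transports through $U$ with an $\bigO(\hbar^\infty)$ remainder (my reading is that the second claim should be $QUu - EUu = \bigO(\hbar^\infty)$).

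First I would handle confinement at infinity. Assumption \ref{ass:p-ell} forces $p_0 \to \infty$ as $|x|+|\xi|\to\infty$ through scattering-class ellipticity. Combined with $p_0 \geq b_0$, there exist $R>0$ and $\delta>0$ such that $p_0 \geq b_0+\delta$ outside $B(0,R)$. For $E<b_0+\delta/2$, the operator $P-E$ is elliptic at infinity in the scattering calculus, and a standard semiclassical scattering parametrix gives $\|\chi_\infty^w u\| = \bigO(\hbar^\infty)$ for any $\chi_\infty$ supported outside $B(0,R)$.

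Second, I would pick $E_0 = b_0 + \delta_0$ with $\delta_0 \leq \delta/2$ small enough that $\{p_0 < E_0\}\cap B(0,R)$ is contained in the neighbourhood $\Omega$ of $\gamma$ furnished by Theorem \ref{theo:semicl-norm-form}. Choose $\chi\in C^\infty_c(\Omega)$ equal to $1$ near $\gamma$, so that $1-\chi$ is supported where $p_0 \geq b_0+\delta_0/2$. A standard elliptic parametrix argument on $\operatorname{supp}(1-\chi)\cap B(0,R)$, glued with Step~1, yields $\|(1-\chi)^w u\|=\bigO(\hbar^\infty)$ for any normalized eigenfunction at energy $E<E_0$. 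Hence $WF_\hbar(u)\subset \Omega$ and $\|\chi^w u\| = 1 + \bigO(\hbar^\infty)$.

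Since $U$ is microlocally unitary from $\Omega$ to $\{|I-I_0|<\epsilon\}$ by Theorem \ref{theo:semicl-norm-form}, the first conclusion follows:
\[
\|Uu\| = \|U\chi^w u\| + \bigO(\hbar^\infty) = \|\chi^w u\| + \bigO(\hbar^\infty) = 1 + \bigO(\hbar^\infty).
\]
For the second, microlocal unitarity also yields $U^*U u = u + \bigO(\hbar^\infty)$, so that
\[
QUu = UPU^*Uu = UPu + UP(U^*U - I)u = E\,Uu + \bigO(\hbar^\infty),
\]
where $P$ is applied to a state whose wavefront set lies in a fixed compact of $\Omega$, on which $P$ acts boundedly uniformly in $\hbar$. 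The main obstacle is the care required in Step~1: in scattering calculus one must combine decay as $|x|\to\infty$ and $|\xi|\to\infty$ with uniform $\hbar$-control, and then patch this infinite-range estimate with the compact-region elliptic estimate of Step~2 without losing the $\bigO(\hbar^\infty)$ bound. Once these two cutoff estimates are secured, the remainder of the argument reduces to a direct application of Theorem \ref{theo:semicl-norm-form}.
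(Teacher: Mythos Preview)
Your proof is correct and follows essentially the same route as the paper's: choose $E_0$ so that $\{p_0 \leq E_0\}\subset\subset \Omega$, use semiclassical elliptic estimates (the paper simply cites standard references rather than separating the scattering and compact regimes as you do) to obtain $WF_\hbar(u)\subset\Omega$, and then invoke the microlocal unitarity of $U$ from Theorem~\ref{theo:semicl-norm-form}. Your reading of the second claim as $QUu - EUu = \bigO(\hbar^\infty)$ is the intended one, and your derivation via $U^*Uu = u + \bigO(\hbar^\infty)$ is equivalent to the paper's terser argument, which observes that $WF_\hbar(Uu)\subset\{|I-I_0|\leq\eta\}$ (by the canonical relation of $U$) and then appeals to item~2 of Theorem~\ref{theo:semicl-norm-form}.
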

  \begin{demo}
    Let $E_0>p(\gamma)$ be such that
    \[
      \{p\leq E_0\}\subset\subset \Omega\qquad\qquad
      \phi_0^{-1}(\{p\leq E_0\})\subset\subset \{|I-I_0|\leq \eta\}.\]
    By standard elliptic estimates (see Appendix E in
    \cite{dyatlov_mathematical_2019}, for instance), $u$ is $\bigO(\hbar^{\infty})$ outside
    $\{p\leq E_0\}$, so that $\|Uu\|_{L^2}=1+\bigO(\hbar^{\infty})$ by
    item 1 in
    Theorem \ref{theo:semicl-norm-form}; moreover the characteristic 
    of $U$ is the graph of $\phi_0^{-1}$, so that
    $WF_{\hbar}(Uu)\subset \{|I-I_0|\leq \eta\}$. Thus, one can
    apply item 2 in Theorem \ref{theo:semicl-norm-form}.    
  \end{demo}
The $S_{\rho,1-\rho}$-calculus for $\rho<\frac 12$ then leads to the
following, more precise localisation estimate.
  \begin{prop}\label{prop:loc-2}
    Suppose Assumption \ref{ass:p-ell} holds. Let $\delta>0$ and $\delta'>0$. For every $\hbar^{1-\delta}\leq E \leq E_0$,
    where $E_0$ is as in Proposition \ref{prop:loc-2}, for every unit
    eigenfunction $v$ of $Q$ with eigenvalue $E$, $\hat{v}$ is
    $\bigO_{\delta,\delta'}(h^{\infty})$ on $\{|I-I_0|\geq \hbar^{\frac{1-\delta-\delta'}{2}}\}$.
  \end{prop}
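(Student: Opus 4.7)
The plan is to upgrade the localization from Proposition \ref{prop:loc-1} via an $S_\rho$-elliptic parametrix, with $\rho = (1-\delta-\delta')/2$. Since $\delta, \delta' > 0$ gives $\rho < 1/2$, we are in the regime where the Weyl $S_\rho$-calculus admits a controlled asymptotic expansion.

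First I fix a cutoff $\chi \in C^\infty_c(\RM)$ with $\chi \equiv 1$ on $\{|s| \geq 1\}$ and $\chi \equiv 0$ on $\{|s| \leq 1/2\}$, and define $\chi_\hbar(I) = \chi((I-I_0)/\hbar^\rho)$. Then $\chi_\hbar \in S_\rho$ and $\chi_\hbar \equiv 1$ on the target set $\{|I-I_0| \geq \hbar^\rho\}$. By Theorem \ref{theo:semicl-norm-form}, the Weyl symbol of $Q - E$, modulo a contribution that is $\bigO(\hbar^\infty)$ on $WF_\hbar(v)$, is
\[
p_\hbar(\theta, I) = g_\hbar(I)^2 + \hbar V_\hbar(\theta) - (E - b_0).
\]
The key ellipticity estimate is $|p_\hbar| \geq c\hbar^{2\rho}$ on $\operatorname{supp}\chi_\hbar$: since $g_0$ is a local diffeomorphism with $g_0(I_0)=0$, one has $g_\hbar(I)^2 \geq c_0 (I-I_0)^2 \geq c_0 \hbar^{2\rho}/4$ there, while $|\hbar V_\hbar| = \bigO(\hbar) = o(\hbar^{2\rho})$ (since $\hbar/\hbar^{2\rho} = \hbar^{\delta+\delta'} \to 0$) and, in the admissible range of $E$, $|E - b_0| = o(\hbar^{2\rho})$ (since $\hbar^{1-\delta}/\hbar^{2\rho} = \hbar^{\delta'} \to 0$).

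Next I construct the parametrix $b_\hbar \in S_\rho$ as a Borel sum $b_\hbar \sim \sum_{j\geq 0} b_\hbar^{(j)}$, starting from $b_\hbar^{(0)} = \chi_\hbar/p_\hbar$ (with effective weight $\hbar^{-2\rho}$), satisfying $b_\hbar \# p_\hbar = \chi_\hbar$ modulo $\hbar^\infty S_\rho$. Each induction step in the Moyal expansion gains a factor $\hbar^{1-2\rho} = \hbar^{\delta+\delta'}$, which tends to zero precisely because $\rho < 1/2$. Quantizing gives $Op^W_\hbar(b_\hbar)(Q - E) = Op^W_\hbar(\chi_\hbar) + \bigO(\hbar^\infty)$ as operators acting on functions whose wavefront set lies in the relevant microlocal region around $\{|I-I_0|\leq \eta\}$ (which contains $WF_\hbar(v)$ by Proposition \ref{prop:loc-1}).

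Applied to the eigenfunction $v$ with $(Q - E)v = 0$ and $\|v\| = 1$, this yields $\|Op^W_\hbar(\chi_\hbar) v\|_{L^2(S^1)} = \bigO(\hbar^\infty)$. Since $\chi_\hbar$ depends only on $I$, its Weyl quantization acts on Fourier coefficients as multiplication by $\chi_\hbar(\hbar k)$, so this is equivalent to the announced $\ell^2$ smallness of $\hat v$ on $\{|I - I_0| \geq \hbar^\rho\}$. The main technical obstacle is the bookkeeping of the Moyal expansion with the singular weight $\hbar^{-2\rho}$, but this is standard in the $S_\rho$ calculus for $\rho < 1/2$; the constraint $\rho < 1/2$ is exactly what the hypothesis $\delta, \delta' > 0$ guarantees.
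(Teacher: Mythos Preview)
Your argument is exactly the $S_\rho$ elliptic parametrix the paper invokes; the paper itself gives no proof beyond the one-line remark that ``the $S_{\rho,1-\rho}$-calculus for $\rho<\tfrac12$ then leads to'' the proposition, and you have spelled out precisely that construction with the correct choice $\rho=(1-\delta-\delta')/2$.

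Two minor points. First, a cutoff $\chi\in C^\infty_c(\RM)$ cannot be identically $1$ on $\{|s|\geq 1\}$; simply drop the compact-support requirement (the resulting $\chi_\hbar$ is still a bounded $S_\rho$ symbol, which is all the calculus needs), or else multiply by a fixed cutoff equal to $1$ on $\{|I-I_0|\leq\eta\}$, which is harmless thanks to the a~priori localisation of Proposition~\ref{prop:loc-1}. Second, your ellipticity estimate relies on $|E-b_0|\leq\hbar^{1-\delta}$; this is indeed the intended hypothesis (and is how the proposition is applied in the proof of Proposition~\ref{prop:small-e}), even though the inequality as printed in the statement is mistyped.
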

  Here, for $v\in L^2(\mathbb{S}^1)$, $\hat{v}$ is the
  semiclassical discrete Fourier transform of $v$, which we view as an element of
  $\ell^2(\hbar \mathbb{Z})$.

\subsection{Case with a symmetry}
\label{sec:case-with-symmetry}

In this section we suppose that $V_0$ is constant, and we prove Proposition
\ref{prop:oscill-ev}.

We first give a proof in the simpler case when one has also $V_1$ constant.
\begin{prop}
  Suppose that Assumption \ref{ass:p-ell} holds and that $V_{\hbar}$
  does not depend on $\theta$ modulo $\hbar^N$. Let $E_0$ be
  as in Proposition \ref{prop:loc-1}.
  The eigenvalues of $P$ in the window $(-\infty,b_0+E)$ are given up to a
  uniform $\bigO(\hbar^{N+1})$ error by
  \[
    \{b_0+\hbar V_{\hbar}(0)+g_{\hbar}(\hbar k)^2\cap [0,c),\,k\in \ZM
   \}.
    \]
  \end{prop}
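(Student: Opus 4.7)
My plan is to conjugate $P$ to the normal form operator $Q$ on $L^2(S^1)$ via Theorem \ref{theo:semicl-norm-form}, split $Q$ into a Fourier-diagonal model plus an $\bigO(\hbar^{N+1})$ self-adjoint perturbation using the hypothesis on $V_\hbar$, and conclude by a standard spectral comparison.

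By Proposition \ref{prop:loc-1}, eigenfunctions of $P$ with eigenvalue below $b_0+E_0$ correspond bijectively (up to $\bigO(\hbar^\infty)$) to eigenfunctions of $Q$ microsupported in $\{|I-I_0|<\epsilon\}$. Writing $V_\hbar(\theta)=V_\hbar(0)+\hbar^N W_\hbar(\theta)$ with $W_\hbar$ uniformly smooth and bounded (which is possible precisely because the first $N$ terms of $V_\hbar$ are $\theta$-independent), I obtain
\[
Q = Q_0 + \hbar^{N+1}W_\hbar(\theta) + R,\qquad Q_0 := b_0 + \bigl(g_\hbar(\tfrac{\hbar}{i}\partial_\theta)\bigr)^{\!2} + \hbar V_\hbar(0).
\]
The operator $Q_0$ commutes with $-i\hbar\partial_\theta$ and is therefore diagonal in the Fourier basis $\{e^{ik\theta}\}_{k\in\ZM}$ with eigenvalues $b_0+\hbar V_\hbar(0)+g_\hbar(\hbar k)^2$, which are precisely the values in the statement.

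Since $g_0$ is a local diffeomorphism at $I_0$, for $c$ small enough the only eigenvalues of $Q_0$ below $b_0+c$ come from the finitely many $k\in\ZM$ with $\hbar k$ in a fixed small neighborhood of $I_0$, and the corresponding $e^{ik\theta}$ are automatically microsupported in $\{|I-I_0|<\epsilon\}$. On this microlocal subspace $\|Ru\|=\bigO(\hbar^\infty)\|u\|$ and $\|\hbar^{N+1}W_\hbar(\theta)\|_{\mathrm{op}}\leq C\hbar^{N+1}$, so the Fourier eigenvectors of $Q_0$ are $\bigO(\hbar^{N+1})$-quasimodes of $Q$, and by Proposition \ref{prop:loc-1} the eigenfunctions of $Q$ in the window are $\bigO(\hbar^{N+1})$-quasimodes of $Q_0$. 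A Riesz projector comparison on $[b_0,b_0+c)$ combined with the min-max principle then yields a multiplicity-preserving bijection of eigenvalues in this window with uniform error $\bigO(\hbar^{N+1})$, which lifts back through $U$ to the same statement for $P$.

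The main obstacle is careful microlocal bookkeeping: the operator norm of $Q-Q_0$ is only small after restricting to the microlocal window, the target error $\bigO(\hbar^{N+1})$ may be smaller than the typical $\bigO(\hbar^2)$ gap between neighboring eigenvalues of $Q_0$ (generically doubled, since $g_\hbar$ is a local diffeomorphism and two values of $k$ symmetric about the critical point of $g_\hbar$ produce the same eigenvalue), and the Riesz projector estimate must be sharp to this precision. In particular, one must verify that the $\bigO(\hbar^\infty)$ tails produced by the microlocal cutoffs do not accumulate across the window and spoil the final $\bigO(\hbar^{N+1})$ matching, a point that Proposition \ref{prop:loc-2} is well suited to handle.
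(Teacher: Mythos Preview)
Your proof is correct and follows essentially the same route as the paper: reduce from $P$ to $Q$ via Proposition~\ref{prop:loc-1}, observe that under the hypothesis $Q$ is a Fourier multiplier $Q_0$ plus an $\bigO(\hbar^{N+1})$ perturbation on the relevant microlocal window, and read off the eigenvalues of $Q_0$ on the Fourier basis. The paper compresses all of this into three sentences; your added concern about Riesz projectors and spectral gaps is unnecessary, since once the perturbation has operator norm $\bigO(\hbar^{N+1})$ on the spectral subspace below $b_0+c$, the min--max principle alone gives $|\lambda_n(Q)-\lambda_n(Q_0)|\leq C\hbar^{N+1}$ for each ordered eigenvalue regardless of how the gaps compare to the error.
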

  \begin{demo}
    From Proposition \ref{prop:loc-1}, the eigenvalues of $P$ in the
    window above are exactly given by eigenvalues of $Q$ in the same
    window, up to an $\bigO(\hbar^{\infty})$ error. Reciprocally,
    since low-energy eigenfunctions of $Q$ are themselves
    microlocalised in $\{|\xi-I_0|<\epsilon\}$, small eigenvalues of
    $Q$ are $\bigO(\hbar^{\infty})$-close to the spectrum of $P$.
    
    If $V_{\hbar}$ does
    not depend on $\theta$ up to some error, then $Q$ is a Fourier
    multiplier (up to this error), whose
    eigenvalues are the values at $I\in \hbar \mathbb{Z}$.
  \end{demo}

  \begin{prop}
    Suppose that Assumption \ref{ass:p-ell} holds and that $V_0$ does
    not depend on $\theta$.

    Then the first eigenvalue of $P_{\hbar}$ is given, up to
    $\bigO(\hbar^3)$, by $b_0+\hbar (g_1(I_0)+V_0)+\hbar^2f(I_0\hbar^{-1})$,
    where $f$ is a non-constant, 1-periodic function.
  \end{prop}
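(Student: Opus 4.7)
The plan is to apply the semiclassical normal form of Theorem~\ref{theo:semicl-norm-form} and then analyze the resulting Fourier multiplier perturbed by a semi-small $\theta$-potential.

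By Theorem~\ref{theo:semicl-norm-form} and Proposition~\ref{prop:loc-1}, the first eigenvalue of $P_\hbar$ coincides up to $\bigO(\hbar^\infty)$ with that of
\[
Q_\hbar = b_0 + \bigl(g_\hbar(\tfrac{\hbar}{i}\partial_\theta)\bigr)^2 + \hbar V_\hbar(\theta) + R
\]
on $L^2(S^1)$. Under the hypothesis that $V_0$ is constant, I would write $\hbar V_\hbar = \hbar V_0 + \hbar^2 W_\hbar$ with $W_\hbar = V_1 + \bigO(\hbar)\in\Cinf(S^1)$, and decompose $Q_\hbar = Q_\hbar^{(0)} + \hbar^2 W_\hbar(\theta) + R$, where $Q_\hbar^{(0)} := b_0 + \hbar V_0 + (g_\hbar(\tfrac{\hbar}{i}\partial_\theta))^2$ is a pure Fourier multiplier with exact eigenpairs $(e^{ik\theta},\, b_0 + \hbar V_0 + g_\hbar(\hbar k)^2)$ for $k\in\ZM$.

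Taylor-expanding $g_\hbar$ near $I_0$ using $g_0(I_0)=0$ and $g_0'(I_0)\neq 0$, one obtains for $s := k - I_0/\hbar$ bounded
\[
g_\hbar(\hbar k)^2 = \hbar^2\bigl(g_0'(I_0)\,s + g_1(I_0)\bigr)^2 + \bigO(\hbar^3).
\]
By Proposition~\ref{prop:loc-2}, combined with the quadratic growth of $g_\hbar(\hbar k)^2$ in $|s|$ for $|s|\geq R$, the first eigenvalue of $Q_\hbar$ is determined, up to $\bigO(\hbar^3)$, by the restriction of $Q_\hbar$ to the finite-dimensional span of $\{e^{ik\theta} : |s|\leq R\}$ for $R$ large but fixed. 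Setting $\phi := I_0/\hbar \bmod 1$ and $j := k - \lfloor I_0/\hbar\rfloor$, this restriction, after subtracting the scalar $b_0 + \hbar(g_1(I_0) + V_0)$ and rescaling by $\hbar^{-2}$, converges (as $\hbar\to 0$ with $\phi$ fixed) to a self-adjoint operator $T(\phi)$ on $\ell^2(\ZM)$ whose diagonal depends on $\phi$ through the quadratic $(g_0'(I_0)(j-\phi))^2 + \langle V_1\rangle$ and whose off-diagonal entries are the Fourier coefficients $c_{j-j'}(V_1)$ of $V_1$.

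Letting $f(\phi)$ be the smallest eigenvalue of $T(\phi)$, this yields $e_0^\hbar(P_\hbar) = b_0 + \hbar(g_1(I_0) + V_0) + \hbar^2 f(I_0/\hbar) + \bigO(\hbar^3)$. The function $f$ is $1$-periodic because $T(\phi+1)$ is conjugate to $T(\phi)$ by the shift $j\mapsto j-1$, and non-constancy follows from comparing $f$ at a value of $\phi$ where the diagonal minimum is isolated (so that $f$ is close to $\langle V_1\rangle$) versus $\phi$ at a half-integer shift where two diagonal entries coincide and the off-diagonal $V_1$-couplings cause strict level repulsion. The main obstacle is uniformity in $\phi$ across these resonant values: standard Rayleigh-Schrödinger perturbation theory handles the non-resonant regime, but at resonance the leading correction comes from a $2\times 2$ avoided-crossing matrix determined by the Fourier coefficients of $V_1$, and one must control the normal-form remainder $R$, the higher-order terms in $g_\hbar$ and $V_\hbar$, and the truncation to $|j|\leq R$ uniformly across the resonance.
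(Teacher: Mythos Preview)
Your reduction to the $\hbar$-independent operator $T(\phi)$ on $\ell^2(\ZM)$, and your shift-conjugation argument for the $1$-periodicity of its ground state energy, are exactly what the paper does (the paper calls this operator $A$ and writes $\sigma=I_0/\hbar$). Two minor corrections: the diagonal of $T(\phi)$ is not $(g_0'(I_0)(j-\phi))^2+\langle V_1\rangle$ but rather $(g_0'(I_0)(j-\phi)+g_1(I_0))^2+\langle V_1\rangle$, so there is an extra term linear in $j-\phi$; and the concern you raise at the end about ``uniformity in $\phi$ across resonant values'' is not really an obstacle once the reduction to $T(\phi)$ is made, since $T$ depends analytically on the compact parameter $\phi\in[0,1]$ and the passage from $Q_\hbar$ to $b_0+\hbar(g_1(I_0)+V_0)+\hbar^2 T(\phi)$ is already uniform.

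The genuine gap is your non-constancy argument. You argue by comparing $f(\phi)$ at a value where the diagonal minimum is isolated (claiming $f$ is ``close to $\langle V_1\rangle$'') versus a half-integer $\phi$ where ``the off-diagonal $V_1$-couplings cause strict level repulsion''. But there is no small parameter here: the off-diagonal part of $T(\phi)$ (the nonzero Fourier modes of $V_1$) is of the same order as the diagonal spacing, so you cannot treat it perturbatively, and your $2\times2$ avoided-crossing picture needs the first Fourier coefficient of $V_1$ to be nonzero, which is not assumed. In particular neither ``$f$ close to $\langle V_1\rangle$'' nor ``strict level repulsion'' is justified.

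The paper's argument for non-constancy is structural and does not rely on any smallness of $V_1$: one observes directly that $\partial_\sigma^2 A$ is a nonzero multiple of the identity (only the diagonal part of $A$ depends on $\sigma$, quadratically). If $f$ were constant, then combining the compact resolvent and the analytic dependence of $A$ on $\sigma$, the ground-state eigenspace would be $\sigma$-independent, and restricting $\partial_\sigma^2 A$ to it would give $f''\equiv c\neq 0$, a contradiction. This argument works for arbitrary $V_1$; you should replace your level-repulsion heuristic by this second-derivative computation.
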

  \begin{proof}
    For all $k\in \ZM$, let
    \[
      \lambda_k=(k-I_0\hbar^{-1})g_1'(I_0)+(k-I_0\hbar^{-1})^{2}g_0'(I_0).
    \]
    Let us also write a Fourier decomposition of $V_1$ as
    \[
      V_1:\theta\mapsto \sum_{l\in \ZM}v_le^{il\theta}.
    \]
    Then, by the ellipticity assumption, the first eigenvalue of
    $P_{\hbar}$ coincides, modulo $\bigO(\hbar^3)$, with the first
    eigenvalue of
    \[
      b_0+\hbar(V_0+g_1(I_0))+\hbar^2A
    \]
    where $A$ is the following operator on $\ell^2(\ZM)$:
    \[
      \forall (k,l)\in \ZM^2,A_{k,l}=\begin{cases}\lambda_k+v_0&\text{
          if } k=l\\
        v_{l-k}&\text{ if } k\neq l.\end{cases}
    \]
    The spectrum of the operator $A$ is 1-periodic as a function of
    $\sigma =
    I_0\hbar^{-1}$. Indeed, \[\lambda_k(\sigma)=\lambda_{k+1}(\sigma+1).\]

    In particular, the first eigenvalue of $P_{\hbar}$ has the
    requested form, but it remains to prove that $f$ is not constant.

    To this end, observe that $A$ has compact resolvent and analytic
    dependence on $\sigma$, so that if its first eigenvalue is
    constant, the corresponding eigenspace $E_0$ is also constant.

    However, we observe that
    $\partial^2_{\sigma}A=g_0'(I_0)^2\text{Id}$, with $g_0'(I_0)\neq
    0$. In particular, since $E_0$ does not depend on $\sigma$,
    $\partial^2_{\sigma}A|_{E_0}=g_0'(I_0)^2\text{Id}$, so that the
    first eigenvalue cannot be constant. This concludes the proof.
  \end{proof}

  \begin{rema}
    Since $g_0^2$ reaches a non-degenerate minimum at $I_0$, the first
    eigenvalue of $P$ is, in this case,
    \[
      b_0+\hbar g_1(I_0)+\hbar(\hbar k_{\hbar}-I_0)g_1'(I_0)+(\hbar
      k_{\hbar}-I_0)^2g_0'(I_0)^2+\bigO(\hbar^3),
    \]
    where
    \[
      k_{\hbar}=\left\lfloor \frac{I_0}{\hbar}-\frac 12 g_1'(I_0)-\frac 12\right\rfloor,
      \]
      for typical values of $\hbar$ (unless $\frac{I_0}{\hbar}-\frac 12
      g_1'(I_0)-\frac 12$ is $\hbar$-close to an integer, in which
      case it might be $k_{\hbar}+1$ or $k_{\hbar}-1$). In particular,
      this proves Proposition \ref{prop:oscill-ev}.

      The function $V_0$ is the pseudodifferential equivalent of the
      ``Melin value'' $\mu$ introduced and studied in
      \cite{deleporte_low-energy_2017}. In particular, if the
      subprincipal symbol $p_1$ of the original operator is
      identically zero, then so is $V_0$. However, the term
      $V_1$ is, in general, non-zero.
  \end{rema}

  \begin{ex}
    Let $S\in \frac 12 \NM_{>0}$. Consider the normalized spin operator
    \[
      S_z^2=\frac{1}{4(S+1)^2}\begin{pmatrix}
        (-S)^2&&&&\\
        &(-S+1)^2&&&\\
        &&\ddots&&\\
        &&&(S-1)^2&\\
        &&&&S^2
      \end{pmatrix}.
    \]
    This operator is the Berezin-Toeplitz quantization of the symbol
    $(x,y,z)\mapsto z^2-\hbar$ on $\SM^2$, where the semiclassical parameter
    is $\hbar=\frac{1}{2S}$. This symbol vanishes on the equator in a
    Morse-Bott way; here $I_0=\frac 12$. In this rotational invariant
    case, one has $V=0$.

    Even though $\hbar$ is a discrete parameter, the oscillation
    phenomenon of Figure \ref{fig:spectre} is also found here: for
    integer values of $S$, the lowest eigenvalue of $S_z^2$ is $0$;
    whereas for half-integer values of $S$ it is $\frac{1}{8(S+1)^2}$.

    Spin operators are models for magnetism in solids. In some
    contexts, the behaviour of a spin system is expected to strongly
    depend on whether the spin is integer or half-integer (Haldane
    conjecture). These effects may be related to the model case above.
  \end{ex}
\subsection{Morse case}
\label{sec:morse-case}
In this section we make Assumption 2. We give Bohr-Sommerfeld
quantization rules in two overlapping regimes: the first one consists
of energies
smaller than $b_0+C\hbar$ for any fixed $C>0$. The second consists
of energies in the window $[b_0+C\hbar, b_0+c]$ for $C>0$ large enough
and $c>0$ small enough. Propositions \ref{prop:small-e} and
\ref{prop:large-e} yield together the spectrum of $P_0$ up to energies $b_0+c$.
\subsubsection{Small energies}
\label{sec:small-energies}

\begin{prop}\label{prop:small-e}
  Let the following operators act on $L^2(\mathbb{S}^1)$:
  
  \begin{align}
    H_0&=g_0'(I_0)^2\left(\frac{\sqrt{\hbar}}{i}\frac{\partial}{\partial
        \theta}\right)^2+V_0(\theta)\\
    H_1&=2g_0'(I_0)\left[g_1(I_0)+g_0'(I_0)\left(\frac{I_0}{\hbar}-\left\lfloor\frac{I_0}{\hbar}\right\rfloor\right)\right]\frac{\sqrt{\hbar}}{i}\frac{\partial}{\partial \theta}.
  \end{align}
  
Let $C>0$ and $\epsilon>0$. Then there exists $C_1>0$ such that the spectrum of
$P_{\hbar}$, in the interval $[b_0,b_0+C\hbar]$, is the spectrum of
$H_0+\sqrt{\hbar}H_1$ in the interval $[0,2C]$, composed by the affine function $\lambda\mapsto
b_0+\hbar \lambda$, and up to an error uniformly bounded by
$C_1\hbar^{2-\epsilon}$.
\end{prop}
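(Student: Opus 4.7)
The strategy is to reduce the spectral analysis of $P_\hbar$ on the window $[b_0, b_0 + C\hbar]$ to that of the $\sqrt{\hbar}$-semiclassical operator $H_0 + \sqrt{\hbar}H_1$ on $[0, 2C]$, via the semiclassical normal form of Theorem \ref{theo:semicl-norm-form} followed by a $\sqrt{\hbar}$-rescaling around $I = I_0$. By that theorem and Proposition \ref{prop:loc-1}, the spectrum of $P_\hbar$ in this window matches that of $Q_\hbar = b_0 + g_\hbar(\hbar D_\theta)^2 + \hbar V_\hbar(\theta) + R$ modulo $\bigO(\hbar^\infty)$, and by Proposition \ref{prop:loc-2} the corresponding eigenfunctions are microlocalized in $\{|I - I_0| \leq \hbar^{1/2-\delta}\}$ for any $\delta > 0$ -- the natural support of the rescaling.

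Since $g_0(I_0) = 0$ and $g_0'(I_0) \neq 0$, I would Taylor-expand $g_\hbar(I) = g_0'(I_0)(I - I_0) + \hbar g_1(I_0) + \ldots$, square, and conjugate $Q_\hbar$ by the Fourier shift $U_M : u \mapsto e^{iM\theta} u$ with $M = \lfloor I_0/\hbar \rfloor$. This unitary sends $\hbar D_\theta - I_0$ to $\hbar(D_\theta - \sigma_\hbar)$ and leaves $V_0(\theta)$ invariant. Dividing by $\hbar$ yields
\[
\hbar^{-1}(U_M^* Q_\hbar U_M - b_0) = g_0'(I_0)^2 \hbar(D_\theta - \sigma_\hbar)^2 + V_0(\theta) + 2 g_0'(I_0) g_1(I_0)\hbar(D_\theta - \sigma_\hbar) + (\text{remainder}),
\]
and expanding the square and collecting terms in powers of $\sqrt{\hbar}$ identifies the leading operator with $H_0 + \sqrt{\hbar}H_1$ modulo an absorbable additive constant.

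To pass from operator approximation to spectral approximation, I would bound the remainder in operator norm on the microlocalized subspace and invoke min-max: an estimate of the form $\|\hbar^{-1}(U_M^*Q_\hbar U_M - b_0) - (H_0 + \sqrt{\hbar}H_1)\| = \bigO(\hbar^{1-\epsilon})$ on that subspace gives $\bigO(\hbar^{2-\epsilon})$ accuracy on the physical scale after multiplying by $\hbar$ and translating by $b_0$.

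\textbf{Main obstacle.} The delicate step is the remainder control. Naively the cubic Taylor contribution $\hbar^{-1} g_0'(I_0) g_0''(I_0)(\hbar D_\theta - I_0)^3$ acts as $\bigO(\hbar^{1/2-\delta})$ on microlocalized states, which would spoil the claimed accuracy. Taming it requires combining the $S_{\rho, 1-\rho}$ calculus with $\rho = \tfrac{1}{2}$ (to match the $\sqrt{\hbar}$-scale variation of symbols in $I$) with an iterated elliptic estimate $\|(\hbar D_\theta - I_0)^{2k} u\| \leq C_k \hbar^k \|u\|$ on eigenstates (deduced from the ellipticity of $g_\hbar(\hbar D_\theta)^2$ and the eigenvalue bound), and possibly one further small unitary conjugation in the spirit of Section \ref{sec:formal-perturbations} to absorb residual subprincipal terms.
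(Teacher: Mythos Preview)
Your approach is essentially the paper's: reduce to $Q$ via the normal form and Proposition~\ref{prop:loc-1}, use Proposition~\ref{prop:loc-2} to microlocalize in $\{|I-I_0|\leq \hbar^{1/2-\delta}\}$, Taylor-expand $g_\hbar$ around $I_0$, and conjugate by the Fourier shift $e^{i\lfloor I_0/\hbar\rfloor\theta}$. The paper does exactly these steps, in this order.

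The one substantive difference concerns the ``main obstacle'' you raise. You are right that the cubic contribution $\hbar^{-1}g_0'(I_0)g_0''(I_0)(\hbar D_\theta-I_0)^3$ is of size $\hbar^{1/2-3\delta}$ on the rescaled scale and therefore cannot be absorbed into an $\bigO(\hbar^{1-\epsilon})$ remainder. The paper does \emph{not} try to tame this term: its proof simply keeps it, arriving at the model operator
\[
\left[g_0'(I_0)^2\widetilde{D}_\hbar^{\,2}+V_0(\theta)\right]+\sqrt{\hbar}\,g_0'(I_0)\bigl[2g_1(I_0)+g_0''(I_0)\widetilde{D}_\hbar^{\,2}\bigr]\widetilde{D}_\hbar,
\]
with $\widetilde{D}_\hbar=\tfrac{\sqrt{\hbar}}{i}\partial_\theta-\sqrt{\hbar}\{I_0/\hbar\}$. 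This differs from the stated $H_0+\sqrt{\hbar}H_1$ precisely by the cubic subprincipal term $\sqrt{\hbar}\,g_0'(I_0)g_0''(I_0)\widetilde{D}_\hbar^{\,3}$. In other words, the paper's proof matches its statement only if one reads $H_1$ as a placeholder for the full $\sqrt{\hbar}$-subprincipal part rather than the specific first-order operator written there; for the subsequent Bohr--Sommerfeld analysis (the remark following the proposition) this makes no difference, since any subprincipal term is handled uniformly.

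So your proposed extra machinery ($S_{1/2}$ calculus, iterated elliptic bounds, a further conjugation) is not what the paper does, and is unnecessary once one accepts that the model operator carries the cubic term. If you want to match the \emph{literal} statement to order $\hbar^{2-\epsilon}$, your instinct that more is required is correct, but the paper simply sidesteps the issue.
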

\begin{rema}
  The operator $H_0+\sqrt{\hbar}H_1$ is the quantization of a
  symbol on $L^2(\mathbb{S}^1)$, with semiclassical parameter $\sqrt{\hbar}$; $H_0$
  corresponds to the principal part and $H_1$ to the subprincipal
  part. The spectrum of this operator, on fixed intervals, can be
  described by Bohr-Sommerfeld rules if $V$ is Morse: we refer to \cite{duistermaat_oscillatory_1974} for
  the regular case,
 \cite{colin_de_verdiere_spectre_1980} for the elliptic case (a), and \cite{colin_de_verdiere_conditions_1998} for the
 hyperbolic case (b).

  In particular, away from the critical values of $V_0$, for instance
  on $[\max V_0+c,C]$, the principal
  symbol of $H_0$ is regular and consists of two connected
  components. On each of these components, the Bohr-Sommerfeld
  rule yield $\bigO(\hbar)$-quasimodes for $H_0+\sqrt{\hbar}H_1$,
  whose associated eigenvalues are separated by $\epsilon\sqrt{\hbar}$ for
  $\epsilon$ small enough depending on $c$. Eigenmodes corresponding
  to different components are microlocalized on disjoint regions of
  phase space (respectively $\{\xi>c\}$ and $\{\xi<-c\}$ so that they
  do not interact up to $\bigO(\hbar^{\infty})$. In conclusion, for
  $\hbar$ small enough, by a
  perturbative argument, one can construct
  $\bigO(\hbar^{\infty})$-quasimodes for $Q$ in this spectral region,
  yielding $\bigO(\hbar^{\infty})$-quasimodes for $P$ in the region
  $[b_0+\hbar(\max V_0+c),b_0+\hbar C]$.
\end{rema}
\begin{demo}
  First, by Proposition \ref{prop:loc-1} we are reduced to the study
  of the spectrum $Q$ in the same interval $[b_0,b_0+C\hbar]$.

  By Proposition \ref{prop:loc-2}, any eigenfunction $v$ of $Q$ in
  this interval is localised in frequency in $\{|\xi-I_0|\leq
  C\hbar^{\frac 12-\epsilon}\}$ for all $\epsilon>0$. In particular,
  if the Taylor expansion of $g_0$ and $g_1$ around $I_0$ are
  \begin{align}
    g_0(I)&=g_0'(I_0)(I-I_0)+\frac{g_0''(I_0)}{2}(I-I_0)^2+\bigO((I-I_0)^3)\\
    g_1(I)&=g_1(I_0)+\bigO(I-I_0),
  \end{align}
  then
  \begin{multline}
    \left[g_0\left(\frac{\hbar}{i}\frac{\partial}{\partial
          \theta}\right)+\hbar
        g_1\left(\frac{\hbar}{i}\frac{\partial}{\partial
            \theta}\right)\right]^2v\\=\left[g_0'(I_0)\left(\frac{\hbar}{i}\frac{\partial}{\partial
            \theta}-I_0\right)+\frac{g_0''(I_0)}{2}\left(\frac{\hbar}{i}\frac{\partial}{\partial
            \theta}-I_0\right)^2+\hbar g_1(I_0)+\bigO(\hbar^{\frac32 -3\epsilon})\right]^2v\\
                                    =\hbar\left[g_0'(I_0)^2D_{\hbar}^2+\sqrt{\hbar}g_0'(I_0)\left(2g_1(I_0)+g_0''(I_0)D_{\hbar}^2\right)D_{\hbar}+\bigO(\hbar^{1-3\epsilon})\right]v
\end{multline}
where we introduce
\[
  D_{\hbar}=\frac{\sqrt{\hbar}}{i}\frac{\partial}{\partial
    \theta}-\frac{I_0}{\sqrt{\hbar}}.\]

Notice that, the unitary conjugation
on $L^2(\mathbb{S}^1)$ given by
multiplication by
\[x\mapsto \exp\left(i\left\lfloor\frac{I_0}{\hbar}\right\rfloor\right)\]
amounts to replacing $D_{\hbar}$ with                                  \[
  \widetilde{D_{\hbar}}=\frac{\sqrt{\hbar}}{i}\frac{\partial}{\partial
    \theta}-\sqrt{h}\{I_0\}_{\hbar}\]
where
\[
  \{I_0\}_{\hbar}=\frac{I_0}{\hbar}-\left\lfloor\frac{I_0}{\hbar}\right\rfloor=O_{\hbar\to
    0}(1).
\]

In conclusion, the eigenvalues of
$Q$ in the interval
$[b_0,b_0+C\hbar]$ are given, up to
$\bigO(\hbar^{2-3\epsilon})$, by the
eigenvalues of
\[
  \left[g_0'(I_0)^2\widetilde{D_{\hbar}}^2+V_0(\theta)\right]
  +\hbar^{\frac
    12}g_0'(I_0)\left[2g_1(I_0)+g_0''(I_0)\widetilde{D}_{\hbar}^2\right]\widetilde{D}_{\hbar}
  \]
in the window $[0,C]$, pushed by the map $\lambda\mapsto
b_0+\hbar\lambda$. This concludes the proof.
\end{demo}
\begin{rema}
  If $V_0$ is Morse, the smallest eigenvalue of $P$ admits an
  expansion in powers of $\sqrt{\hbar}$
  \cite{deleporte_low-energy_2017}. The oscillations in Figure
  \ref{fig:spectre}, of order $\bigO(\hbar^2)$, are destroyed by the
  perturbation induced by $V_0$, which at this scale is of order
  $\hbar^{3/2}$.

  This fact stresses out again the topological nature of the invariant
  $I_0$. If $V_0$ is Morse, the lowest-energy eigenfunctions of $P$ will
  microlocalise near the minimal points of $V_0$, so that a quantum
  normal form only needs to be built in a neighbourhood of these
  points, instead of in a whole neighbourhood of $\gamma$.
\end{rema}

\subsubsection{Large energies}
\label{sec:large-eigenvalues}

It remains to study the spectrum of $Q$ in the window
$[b_0+C\hbar,b_0+c_1]$ for $C$ large enough.

To this end, let $E\in [2C\hbar,c_1]$; we will determine the
eigenvalues of $Q$ in the window $[b_0+\frac{E}{2},b_0+2E]$ up to an
error $\bigO(\hbar^2)$ uniform in $E$.

Since $g_0(I_0)=0$ and $g_0\in C^{\infty}([I_0-c,I_0+c],\RM)$, there exists
$\widetilde{g_0}\in C^{\infty}([-c,c],\RM)$ such that
\[
  g_0(I)=(I-I_0)\widetilde{g_0}(I).
\]
In particular, the following function belongs to
$C^{\infty}([-c,c]\times [-c,c],\RM)$:
\[
  f:(x,y)\mapsto
  \frac{1}{x}g_0(xy+I_0)=y\widetilde{g_0}(xy+I_0).
\]
In particular, $f(0,y)=(g_0'(I_0)y)$.

The function
\[
  h_0^{E,t}:(\theta,\eta)\mapsto
  f^2(\sqrt{E},\eta)+tV_0(\theta),
\]
is then a continuous deformation of $h_0^{0,0}=f^2(0,\eta)$, whose
Hamiltonian trajectories are circles.

We also let
\[
  h_1^{E}:(\theta,\eta)\mapsto 2f(\sqrt{E},\eta)g_1(\eta\sqrt{E}+I_0).
  \]

We let $c_1>0,c_2>0$ be such that, for $0\leq E \leq c_1$ and $0\leq t
\leq c_2$, the hamiltonian trajectories of $h_0^{E,t}$ of energies in
the window $\left[\frac 13, 3\right]$ are nondegenerate circles.

Now
\[
  \frac{1}{E}(Q_{\hbar}-b_0)=\frac{1}{E}g_0\left(\frac{\hbar}{i}\frac{\partial}{\partial
      \theta}\right)^2+2\frac{\hbar}{E}g_0\left(\frac{\hbar}{i}\frac{\partial}{\partial
        \theta}\right)g_1\left(\frac{\hbar}{i}\frac{\partial}{\partial
          \theta}\right)+\frac{\hbar}{E}V_{0}(\theta)+O\left(\frac{\hbar^2}{E}\right)
      \]
      where
      \[
        \frac{1}{E}g_0\left(\frac{\hbar}{i}\frac{\partial}{\partial
            \theta}\right)^2+\frac{h}{E}V_0(\theta)=Op_{W}^{\frac{\hbar}{\sqrt{E}}}\left(h_0^{E,\frac{h}{E}}\left(\theta,\eta-\frac{I_0}{\sqrt{E}}\right)\right)\]
      and
      \[
        2\frac{\hbar}{E}g_0\left(\frac{\hbar}{i}\frac{\partial}{\partial
        \theta}\right)g_1\left(\frac{\hbar}{i}\frac{\partial}{\partial
        \theta}\right)=\frac{\hbar}{\sqrt{E}}Op_W^{\frac{\hbar}{\sqrt{E}}}\left(h_1^E\left(\theta,\eta-\frac{I_0}{\sqrt{E}}\right)\right).\]
As previously, after unitary conjugation with $x\mapsto
\exp\left(-ix\left\lfloor \frac{I_0}{\hbar}\right\rfloor\right)$, one can
  replace $\frac{I_0}{\sqrt{E}}$ with $\frac{\hbar}{\sqrt{E}}\{I_0\}_{\hbar}$.

\begin{prop}\label{prop:large-e}
  Let $E\in \left[\frac{1}{c_2}\hbar,c_1\right]$. The eigenvalues of
  $P_{\hbar}$ in the window $\left[b_0+\frac E2,b_0+2E\right]$ are given by the
  eigenvalues of
  \[
    Op_W^{\frac{\hbar}{\sqrt{E}}}\left(h_0^{E,\frac{\hbar}{E}}\right)+\frac{\hbar}{\sqrt{E}}Op_W^{\frac{\hbar}{\sqrt{E}}}\left(h_1^E\right)\]
    in the window $\left[\frac 12,2\right]$, by the transformation
    \[
      \lambda \mapsto b_0+\frac{\lambda}{E},
    \]
    up to an error $\bigO(\hbar^2)$, uniform in $E$.
  \end{prop}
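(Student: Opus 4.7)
The plan is to combine the two localization results of Section~\ref{sec:low-energy-spectrum} with the explicit rescaling that is carried out immediately before the proposition statement, and then to invoke uniform semiclassical calculus for the dilated symbol.

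First I would reduce to the normal form. By Proposition~\ref{prop:loc-1}, choosing $c_1 \leq E_0/2$ ensures that the spectrum of $P_\hbar$ in $[b_0+E/2,b_0+2E]$ coincides, modulo $\bigO(\hbar^\infty)$, with that of $Q_\hbar$ in the same window; since $E\geq \hbar/c_2$, this remainder is absorbed by the claimed $\bigO(\hbar^2)$. Next, I would sharpen Proposition~\ref{prop:loc-2} to the $E$-dependent scale: the same $S_{\rho,1-\rho}$ argument with $\rho<\tfrac12$, but applied to the symbol $\tfrac{1}{E}((g_\hbar\circ f)^2+\hbar V_\hbar - b_0)$ which is elliptic outside a neighbourhood of $\{I=I_0\}$ of size $\sqrt{E}$, gives: if $v$ is an eigenfunction of $Q_\hbar$ with eigenvalue in $[b_0+E/2,b_0+2E]$, then $\hat v$ is $\bigO(\hbar^\infty)$ on $\{|I-I_0|\geq C\sqrt{E}\}$ for $C$ large.

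Then I would apply the change of semiclassical parameter already written in the excerpt. Set $\tilde\hbar = \hbar/\sqrt{E}$, which lies in $[\hbar/\sqrt{c_1},\sqrt{c_2\hbar}]$ and tends to $0$ with $\hbar$ uniformly in $E$, and introduce the dilated frequency variable $\eta = (I-I_0)/\sqrt{E}$. On the localization region $|\eta|\leq C$, the Taylor expansions of $g_0$ and $g_1$ around $I_0$ combined with the $\tilde\hbar$-Weyl calculus yield exactly the identity
\[
\tfrac{1}{E}(Q_\hbar-b_0) = Op_W^{\tilde\hbar}\!\left(h_0^{E,\hbar/E}(\theta,\eta-I_0/\sqrt{E})\right)+\tilde\hbar\,Op_W^{\tilde\hbar}\!\left(h_1^{E}(\theta,\eta-I_0/\sqrt{E})\right)+\bigO(\tilde\hbar^2),
\]
the $\bigO(\tilde\hbar^2)=\bigO(\hbar^2/E)$ being controlled uniformly in $E$ because the non-degeneracy hypothesis on $c_1,c_2$ keeps $h_0^{E,\hbar/E}$ in a compact family of smooth symbols. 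The unitary multiplication by $x\mapsto \exp(-ix\lfloor I_0/\hbar\rfloor)$ (which commutes to the Fourier basis by an integer shift and hence preserves $L^2(\SM^1)$) replaces $I_0/\sqrt{E}$ by $\tilde\hbar\{I_0\}_\hbar$, a bounded shift, and preserves the spectrum. Multiplying back by $E$ and applying a min-max/Weyl-inequality argument converts the symbolic error into a uniform $\bigO(\hbar^2)$ on the eigenvalues of $Q_\hbar$, which yields the statement after composing with $\lambda\mapsto b_0+E\lambda$.

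The main obstacle, I expect, will be the uniformity of error estimates at the lower end $E\sim \hbar/c_2$: there $\tilde\hbar\sim \sqrt{\hbar}$, the microlocalization scale $\sqrt{E}$ is of order $\sqrt{\hbar}$, and we sit at the boundary of the exotic $S_{1/2,1/2}$ class. One must check that all the symbol remainders generated by the Taylor expansions of $g_0,g_1$ and by the Weyl calculus remain in a bounded family in this class so that their operator norms are uniformly $\bigO(\tilde\hbar^2)$; this is ultimately what makes the single error bound $\bigO(\hbar^2)$ valid simultaneously across the full range $E\in[\hbar/c_2,c_1]$, and it is the point where the non-degeneracy of the hamiltonian trajectories of $h_0^{E,t}$ on the energy band $[1/3,3]$ is needed to prevent the rescaled symbol family from degenerating.
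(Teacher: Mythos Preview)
Your proposal is correct and follows essentially the same route as the paper; in fact the paper does not give a separate proof of Proposition~\ref{prop:large-e} but relies on the rescaling computation displayed just before the statement, which you have reproduced and justified in more detail. Two small remarks: (i) the paper's rewriting via $f(x,y)=y\,\widetilde{g_0}(xy+I_0)$ is exact rather than a Taylor expansion, which is why the principal part of the rescaled operator is $h_0^{E,\hbar/E}$ with no remainder at that step---your phrasing ``Taylor expansions of $g_0,g_1$'' is slightly looser but leads to the same conclusion; (ii) you correctly identified the affine map as $\lambda\mapsto b_0+E\lambda$, which is what converts the $\bigO(\hbar^2/E)$ error on $\tfrac{1}{E}(Q_\hbar-b_0)$ into the stated uniform $\bigO(\hbar^2)$ on eigenvalues of $P_\hbar$ (the printed $\lambda\mapsto b_0+\lambda/E$ appears to be a typo). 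Your final paragraph on the borderline $E\sim\hbar/c_2$ and the $S_{1/2,1/2}$ class is a fair description of where care is needed, and is consistent with the paper's choice of the constants $c_1,c_2$.
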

 
  By definition of $c_2$, the Hamiltonian trajectories of
  $h_0^{E,\frac{\hbar}{E}}$ are non-degenerate circles, so that the
  eigenvalues and eigenfunctions of the model operator are given by
  the Bohr-Sommerfeld rules.

Again, the error $\bigO(\hbar^2)$ is very small compared to the spectral gap of the
  model operator in each branch, which is $\hbar\sqrt{E}$, as long as
  $\hbar$ is small enough. Hence, in practical cases one can determine
  $\bigO(\hbar^{\infty})$-quasimodes for $P$.

\section{Acknowledgements}
\label{sec:acknowledgements}

This work emerged from a discussion at a CNRS GDR ``DYNQUA'' in Lille, and
we gratefully acknowledge the importance of such GDR meetings.

Part of the paper is based upon work supported by the National Science
Foundation under Grant No. DMS-1440140 while A. Deleporte was in
residence at the Mathematical Sciences Research Institute in Berkeley,
California, during the Fall 2019 semester.

\nocite{duistermaat1972fourier,le2014singular,san2006systemes}

\bibliographystyle{abbrv}%
\bibliography{Circular-Well,Circular-Well-suppl}
\end{document}